\newcommand{\mathsym}[1]{{}}
\newcommand{\unicode}[1]{{}}
\theoremstyle{plain}
\newtheorem{corollary}{Corollary}
\newtheorem{lemma}{Lemma}
\newtheorem{proposition}{Proposition}
\theoremstyle{definition}
\newtheorem{definition}{Definition}
\theoremstyle{remark}
\newtheorem{remark}{Remark}
\newcommand{\C}{\mathbb C}
\newcommand{\R}{\mathbb R}
\newcommand{\Z}{\mathbb Z}
\newcommand{\DySt}{\displaystyle}
\newcommand{\half}{
        {\lower0.00ex\hbox{\raise.6ex\hbox{\the\scriptfont0 1}
                           \kern-.5em\slash\kern-.1em\lower.45ex
                                     \hbox{\the\scriptfont0 2}}}}
\newcommand{\quarter}{
        {\lower0.00ex\hbox{\raise.6ex\hbox{\the\scriptfont0 1}
                           \kern-.5em\slash\kern-.1em\lower.45ex
                                     \hbox{\the\scriptfont0 4}}}}
\newcommand{\tquarter}{
        {\lower0.00ex\hbox{\raise.6ex\hbox{\the\scriptfont0 3}
                           \kern-.5em\slash\kern-.1em\lower.45ex
                                     \hbox{\the\scriptfont0 4}}}}
\newcommand{\eighth}{
        {\lower0.00ex\hbox{\raise.6ex\hbox{\the\scriptfont0 1}
                           \kern-.5em\slash\kern-.1em\lower.45ex
                                     \hbox{\the\scriptfont0 8}}}}
\newcommand{\othird}{
        {\lower0.00ex\hbox{\raise.6ex\hbox{\the\scriptfont0 1}
                           \kern-.5em\slash\kern-.1em\lower.45ex
                                     \hbox{\the\scriptfont0 3}}}}
\def\cydot{\leavevmode\raise.4ex\hbox{.}}
\begin{document}

\title[]
{The diagonal two-point correlations of the Ising model on the anisotropic triangular lattice and Garnier systems}

\author{N.S.~Witte}
\address{School of Mathematics and Statistics,
University of Melbourne,Victoria 3010, Australia}
\email{\tt nsw@ms.unimelb.edu.au}

\begin{abstract}
The diagonal spin-spin correlations $ \langle \sigma_{0,0}\sigma_{N,N} \rangle $ of the Ising model on a triangular
lattice with general couplings in the three
directions are evaluated in terms of a solution to a three-variable extension of the sixth Painlev\'e system, namely a Garnier
system. This identification, which is accomplished using the theory of bi-orthogonal polynomials on the unit circle with regular 
semi-classical weights, has an additional consequence whereby the correlations are characterised by a simple 
system of coupled, nonlinear recurrence relations in the spin separation $ N \in \Z_{\geq 0} $. These later recurrence relations are
an example of the discrete Garnier equations which, in turn, are extensions to the ``discrete Painlev\'e V'' 
system.
\end{abstract}

\subjclass[2010]{34M55, 82B20, 39A10, 42C05, 33C45}
\maketitle

In this study we investigate the spin-half Ising model on the anisotropic, homogeneous triangular lattice
and in particular the diagonal spin-spin correlations of the model.
We identify a particular three-variable Garnier system in Corollary \ref{GarnierID} as 
the integrable system lying behind this model and thus find an extension of the evaluation by
Jimbo and Miwa in 1980 \cite{JM_1980} of the same quantity for the square lattice case in terms of a $ \tau $ function
of the sixth Painlev\'e system. This extension is quite literal in the sense that the anisotropic triangular
model includes the diagonal and row or column correlation functions of the rectangular lattice as special cases.
To our knowledge this is the first appearance of a Garnier system in a statistical mechanical model even
though such systems have arisen in other mathematical physics contexts: see \cite{MS_2000} for the ADM approach
to $2\!+\!1$-dimensional gravity; \cite{IP_1995} for the symmetry reductions of the self-dual Yang-Mills
equations in $2\!+\!2$ dimensions; \cite{Ton_1994} for a generalised H\'enon-Heiles system; and \cite{Kos_1989} for
finite gap solutions to the Garnier system and the $g$-dimensional anisotropic harmonic oscillator in a 
radial quartic potential.

The first identification of a Painlev\'e system for the Ising model correlations was for the scaled and critical correlations\footnote{
Where the limits $ N \to \infty $ and $ T \to T_C $ are carried out together in a certain combination.} for the square lattice
and in this case was made to a special type of the $ D^{(1)}_6 $ Painlev\'e III system by Wu, McCoy, Tracy and Barouch in \cite{WMTB_1976}.
The extension to the lattice model for all $ T $ and finite $ N $ was found by Jimbo and Miwa \cite{JM_1980}.
This latter result for the spin-spin correlation $ \langle \sigma_{0,0}\sigma_{N,N} \rangle $ can be summarised as
\begin{equation}
   \sigma_{N}(t) = \left\{
   \begin{array}{lr}
     t(t-1)\frac{\displaystyle d}{\displaystyle dt}\log\langle \sigma_{0,0}\sigma_{N,N} \rangle-\frac{1}{4},  & T<T_C \\
     &\\
     t(t-1)\frac{\displaystyle d}{\displaystyle dt}\log\langle \sigma_{0,0}\sigma_{N,N} \rangle-\frac{1}{4}t, & T>T_C
   \end{array}     \right. ,
\end{equation}
where $ \sigma_{N}(t) $ is the solution to the $ \sigma$-form of Painlev\'e VI which is the second order, second degree ordinary
differential equation
\begin{equation}
   \left[ t(t-1)\frac{d^2 \sigma_{N}}{dt^2} \right]^2 
     = N^2\left[ (t-1)\frac{d\sigma_{N}}{dt}-\sigma_{N} \right]^2
     -4\frac{d\sigma_{N}}{dt}\left[ (t-1)\frac{d\sigma_{N}}{dt}-\sigma_{N}-\tfrac{1}{4} \right]\left[ t\frac{d\sigma_{N}}{dt}-\sigma_{N} \right] .
\label{JM1980}
\end{equation}
Here  $ t=k^{-2} $ and $ k=\sinh 2K_1 \sinh2K_2 $ where $ K_1, K_2 $ are the couplings between sites along the horizontal
and vertical edges respectively. This equation is solved subject to the boundary condition as $ t \to 0 $, i.e. in the $ T<T_C $ regime
\begin{equation}
  \langle \sigma_{0,0}\sigma_{N,N} \rangle(t) = (1-t)^{1/4} + \frac{(1/2)_{N}(3/2)_{N}}{4[(N+1)!]^2}t^{N+1}(1+{\rm O}(t)) .
\end{equation}
Alternative derivations of this result can be found in \cite{BD_2002} and \cite{FW_2004a}.

Equation \eqref{JM1980} is a specialisation of the general $ \sigma$-form of Painlev\'e VI which is constructed from
the Hamiltonian system for Painlev\'e VI \cite{Mal_1922}, \cite{Oka_1987}
\begin{equation}
   \frac{dq}{dt} = \frac{\partial H}{\partial p}, \qquad \frac{dp}{dt} = -\frac{\partial H}{\partial q} ,
\end{equation}
where the non-autonomous Hamiltonian is given by
\begin{equation}
   t(t-1)H = q(q-1)(q-t)p^2  \\ - [\alpha_4(q-1)(q-t)+\alpha_3 q(q-t)+\alpha_0 q(q-1)]p + \alpha_2(\alpha_1+\alpha_2)(q-t) .
\end{equation}
Here the $ D_4 $ root system constraint $ \alpha_0+\alpha_1+2\alpha_2+\alpha_3+\alpha_4 = 1 $ applies.
The general $\sigma$-form of the sixth Painlev\'e equation is then
\begin{equation} 
   h'(t)\left[ t(1-t)h''(t) \right]^2 + \left[ h'(t)(2h(t)-(2t-1)h'(t))+b_1b_2b_3b_4 \right]^2 = \prod^{4}_{k=1}(h'(t)+b_k^2) ,
\end{equation} 
and $ h(t) $ is related to the Hamiltonian by
\begin{equation}
   h(t) = t(t-1)H(t)+e_2(b_1,b_3,b_4)t-\tfrac{1}{2}e_2(b_1,b_2,b_3,b_4) ,
\end{equation}
where $ e_2(\cdot) $ is the second elementary symmetric polynomial of its arguments.

In the present work we require the theory for the multi-variate extension of the Painlev\'e VI system.
Let us place $ M+1 \geq 3 $ independent variables in canonical position (see \cite{IKSY_1991}), i.e. taking them at distinct points
\begin{equation}
   t_0 = 0, t_1, \ldots, t_{M-2}, t_{M-1}=1, t_{M}=\infty, 
\end{equation}
each of which is associated with the parameters $ \rho_0, \rho_1, \ldots, \rho_{M-2}, \rho_{M-1}=\rho, \rho_{M}=\rho_{\infty} $ 
respectively.
The Garnier system $ {\mathcal G}_{M-2} $ of type $ L(1^{M};M-2) $ is labelled by an abbreviated Riemann-Papperitz-like symbol
(see \S 4 of \cite{IKSY_1991} for the definition) in the following manner
\begin{equation}
	\left\{
\begin{array}{cccccc}
	t_0=0    	  & t_1     	     & \cdots & t_{M-2}     	 & t_{M-1}=1 	    & \infty                     \\
	\theta_0=n-\rho_0 & \theta_1=-\rho_1 & \cdots & \theta_{M-2}=-\rho_{M-2} & \theta=-\rho     & \theta_{\infty}=2n+1-\sum^{M-1}_{j=0}\rho_j  
\end{array}
	\right\} .
\end{equation}
The dynamics of the Garnier system is governed by the 
Hamiltonian system $ \{q_j,p_j;K_j,t_j\}^{M-2}_{j=1} $ with co-ordinate $ q_r $ and momenta $ p_r $ and with the Hamiltonian \cite{KO_1984,IKSY_1991,Wit_2009}
\begin{equation}
  K_j = \frac{\Theta_n(t_j)}{W'(t_j)}\sum^{M-2}_{r=1}\frac{W(q_r)}{\Theta'_n(q_r)}\frac{1}{t_j-q_r}
    \left[ p^2_r+p_r\left( \frac{2V(q_r)}{W(q_r)}-\frac{n}{q_r}-\frac{1}{t_j-q_r} \right)
      -\frac{n(1+m_0)}{q_r(q_r-1)} 
    \right] .
\label{GarnierHam}
\end{equation}
The monodromy exponents are given by $ \theta_j = -\rho_j $ for $ j=1,\ldots, M-1 $,
$ \theta_0 = n-\rho_0 $, and $ \theta_{\infty} = 2n+1-\sum^{M-1}_{j=0}\rho_j $ 
with the constant $ \kappa = -n(1+m_0) $.
The polynomial $ W $, the denominator spectral polynomial, is given by 
\begin{equation}
   W(z) = z(z-1)\prod^{M-2}_{j=1}(z-t_j)
        = z\sum^{M-1}_{l=0} (-)^{M-1-l}z^l e_{M-1-l} ,
\label{Wdefn}
\end{equation}
where the elementary symmetric functions of the singularity positions are denoted 
$ e_l, l=0, \ldots,M-1 $ and in particular $ e_0=1 $, $ e_{M}=0 $ and $ e_{M-1}=\prod^{M-2}_{j=1}t_j $.
The polynomial $ 2V $, the numerator spectral polynomial, is 
\begin{equation}
   2V(z) = z(z-1)\prod^{M-2}_{j=1}(z-t_j)
             \left\{ \frac{\rho_0}{z}+\frac{\rho}{z-1}+\sum^{M-2}_{j=1}\frac{\rho_j}{z-t_j} \right\}
         = \sum^{M-1}_{l=0} (-)^{M-1-l}z^l m_{M-1-l} ,
\label{2Vdefn}
\end{equation}
where the last relation defines the coefficients $ m_l, l=0,\ldots,M-1 $ and 
we observe that $ m_0=\rho_0+\rho+\sum^{M-2}_{j=1}\rho_j $ and $ m_{M-1}=\rho_0 e_{M-1} $.
The remaining polynomial, termed the spectral coefficient $ \Theta_n $, of degree $ M-2 $, is 
\begin{equation}
 \Theta_n(z) = \Theta_{\infty}\prod^{M-2}_{r=1}(z-q_r) , \quad
 \Theta_{\infty} = (n+1+m_0)\frac{\kappa_n}{\kappa_{n+1}} .
\label{ThetaRep}
\end{equation}

This Hamiltonian system \eqref{GarnierHam} is not polynomial in the canonical variables $ q_r $ and the dynamical 
equations for $ q_r $ do not possess the Painlev\'e property in $ t_j $, however using the well-known 
canonical transformation to the new Hamiltonian system $ {\mathcal H}_{M-2} = \{Q_j,P_j;H_j,s_j\} $ 
\cite{KO_1984}, \cite{IKSY_1991}
\begin{align}
   s_j & = \frac{t_j}{t_j-1} ,
\label{Hfix:a} \\
   Q_j & = \frac{t_j\Theta_n(t_j)}{\Theta_{\infty}W'(t_j)} ,
\label{Hfix:b} \\
   P_j & = -(t_j-1)\sum^{M-2}_{r=1}\frac{p_r}{q_r(q_r-1)}
    \frac{\Theta_{\infty}W(q_r)}{(q_r-t_j)\Theta'_n(q_r)} ,
\label{Hfix:c}
\end{align}
both these deficiencies can be removed.
The sixth Painlev\'e system is the $ M=3 $ case of the above.

The $\sigma$-form differential equation \eqref{JM1980} is quite useful in certain types of analysis of the correlations
however additional insight can be gained from a recurrence system in $ N $ which generates the correlation at $ N+1 $ from ones at $ N $
or earlier, see for example the use made of these in \cite{CGNP_2011}.
In Propositions \ref{SLdiagonalCorr} and \ref{recoverSLdiagonalCorr} a system of two coupled, first order non-linear difference equations for the diagonal correlations of 
the square lattice Ising model is given as ($ \alpha = k^{-1} $)
\begin{gather}
   \alpha^{-2}f_{n}f_{n+1} = \frac{\left[ g_{n}+(n+1)\alpha-\alpha^{-1} \right]\left[ g_{n}+(n+\frac{1}{2})\alpha-\frac{1}{2}\alpha^{-1} \right]}
                                  {\left[ g_{n}+n\alpha^{-1} \right]\left[ g_{n}+(n+\frac{1}{2})\alpha^{-1}-\frac{1}{2}\alpha \right]} ,
\\
   g_{n}+g_{n-1}+2n\alpha^{-1}-\tfrac{1}{2}(\alpha+\alpha^{-1}) + (n+\tfrac{1}{2})\frac{(\alpha^2-1)}{\alpha}\frac{1}{1-f_{n}} + (n+\tfrac{1}{2})\frac{\alpha(\alpha^2-1)}{\alpha^2-f_{n}} = 0 ,
\end{gather}
although different but entirely equivalent systems have been given in \cite{FW_2005} and \cite{Wit_2007}.
The above system is a specialisation of the general ``discrete Painlev\'e V'' system \cite{GORS_1998}, \cite{NRGO_2001}
or the $ D^{(1)}_4 $ system in Sakai's classification \cite{Sak_2001}
\begin{equation}                  
  tf_nf_{n+1}
  = \frac{[\omega_n+n-t-\rho_0(t+1)-(\rho_t+\rho_1)t][\omega_n+n-t-\rho_0(t+1)-\rho_t-\rho_1t]}
           {[\omega_n+nt-1-\rho_0(t+1)-\rho_t-\rho_1][\omega_n+nt-1-\rho_0(t+1)-\rho_t-\rho_1t]} ,
\end{equation}
and
\begin{equation}       
  \omega_n+\omega_{n-1}+(2n-1)t-2-2\rho_0(t+1)-2\rho_t-\rho_1(t+1) 
  = (n-\rho_0)\frac{1-t}{f_n-1}+(n+1+m_0)\frac{1-t}{tf_n-1} .
\end{equation}
All of the features that apply in the diagonal correlations of the square lattice case as a 
consequence of the identification with an integrable system, also apply in the case of the triangular lattice
but only in a slightly more complicated way. Thus the main results of our study are the systems of non-linear
recurrence relations for the diagonal correlations of the triangular Ising model in Corollary \ref{NonlinearRR} and
\ref{Recover}, and by implication for the column/row correlations of the anisotropic square lattice Ising model
in Corollary \ref{NonlinearRR_ColCorrSqL} and Corollary \ref{Recover_ColCorrSqL}. We will not undertake the
task of writing down nor analysing here the coupled, partial differential equations that follow from 
\eqref{GarnierHam} and which directly generalise \eqref{JM1980} as this would divert us from our primary goal.

The Ising model on the triangular lattice has attracted some considerable interest, partly because it exhibits the phenomena of 
frustration in the anti-ferromagnetic phase and partly because it led to the simplest form for the diagonal correlations on the 
square lattice. The partition function has been evaluated through all the methods known to work in treating the Ising model: 
Husimi and Sy{\^o}zi \cite{HS_1950}, \cite{Syo_1950} used a simplified version of Onsager's algebraic method on the isotropic
lattice; Wannier \cite{Wan_1950} used the method of Kaufman and Onsager for the isotropic lattice whereas Temperley \cite{Tem_1950}
employed this approach on the anisotropic lattice; Newell first employed Kaufman's method in \cite{New_1950a} and then Kaufman
and Onsager's method on the anisotropic lattice in \cite{New_1950}; Potts employed the combinatorial method of Kac and Ward on
the anisotropic model in \cite{Pot_1955}. In all of these works the isotropic lattice was observed to possess a Curie point
singularity in the ferromagnetic case, just like that for the square lattice, but no singularity was found in the antiferromagnetic
case. Subsequently Stephenson produced a series of works I-IV, \cite{Ste_1964, Ste_1966, Ste_1970a, Ste_1970b}, starting with the
Pfaffian representation of the partition function introduced by Kastelyn and utilised by Montroll, Potts and Ward to compute the
square lattice row/column correlations \cite{MPW_1963}. It is Stephenson's first and fourth works that we will primarily draw
upon here. In his first work the two-spin correlations are evaluated as a Toeplitz determinant; in the second some particular
four-spin correlations are evaluated; in the third the isotropic anti-ferromagnetic lattice is studied using the two and four-spin
correlations and the asymptotics found for these at large site separations; and in the fourth we find the most detailed
treatment of the anisotropic lattices in both the ferromagnetic and anti-ferromagnetic phases. Subsequent to these earlier studies
a number of modern re-derivations of the partition function (and extensions thereof) \cite{Ple_1988}, \cite{Bug_1996} have been 
made by exploiting the free-fermionic character of the model using Grassmann variable techniques. 

The triangular Ising model is known to be in the Villain-Stephenson universality class with critical exponents 
$ \alpha=0, (2-\alpha)/\nu=2, \nu=1, \beta/\nu=1/8, \delta=7, \gamma/\nu=7/4, \Delta/\nu=7/4 $, see for example \cite{HTM_1992}. 
As mentioned above because the triangular Ising model in the anti-ferromagnetic regime is a prototype for geometrical
frustration an extensive modern literature has emerged which we will only briefly touch upon. For example there are the studies
\cite{ZS_2005}, \cite{LYC_2008}, \cite{YLCM_2008} of the Ising model on a triangular Kagom\'e lattice which explore this issue.
Another feature of this effect is that the ground state of the model is highly degenerate, $ \# configurations = {\rm O}(e^{\# sites}) $,
and has residual entropy at $ T=0 $. A consequence of this is that the boundary conditions do affect this entropy as
$ \# sites \to \infty $ and some computations \cite{MGP_2003}, \cite{BM_2006} have clearly illustrated this.
Perhaps more relevant to this work is the study \cite{WM_2009} of the pair (both diagonal and off-diagonal) correlations of the
isotropic triangular Ising model in the anti-ferromagnetic phase at $ T=0 $ and $ T>0 $ through a number of approaches:
the exact results for $ N\leq 20 $ numerically evaluated, asymptotic approximations as $ N \to \infty $ and Monte-Carlo simulations.
Here it should be noted that in the isotropic anti-ferromagnetic
case the model has no long-range order for $ T>0 $, i.e. the critical point is at $ T=0 $. What is observed here is the
lack of exact results even in this highly specialised case and for $ T>0 $ the existing approximations are of limited
accuracy.

\subsection{Correlations along the diagonal of the anisotropic triangular Ising model}

The Ising model is a system of spins $ \sigma_r \in \{-1,1\} $ located at site $ r=(i,j) $ on a
triangular lattice of dimension $(2L+1) \times (2L+1)$, or equivalently with the sites on a 
rectangular lattice having in additional to nearest-neighbour couplings along the $x$ and $y$ 
axes a third next-nearest-neighbour coupling along the up-right diagonals. Our conventions for
the labelling of sites and of the coupling constants $ K_{i}, i=1,2,3 $ are displayed in
Fig. \ref{TRaxes}, with the origin at the centre of the lattice.

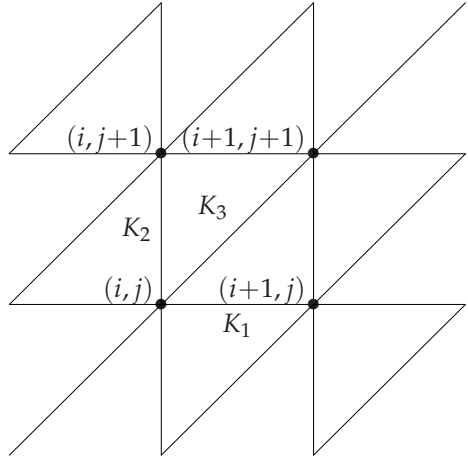
\begin{figure}[H]
\[
 \begin{xy}
    *\xybox{0;<2cm,0cm>:<0cm,2cm>::
           ,0="O"*{\bullet}*!RD{(i,j)\;}
           ,{\ar@{-}_{\DySt K_1} "O";"O"+<2cm,0cm>}
           ,{\ar@{-}^{\DySt K_2} "O";"O"+<0cm,2cm>}
           ,{\ar@{-}^{\DySt K_3} "O";"O"+<2cm,2cm>}
     ,"O";"O"+<-2cm,0cm>**@{-}
     ,"O";"O"+<0cm,-2cm>**@{-}
     ,"O";"O"+<-2cm,-2cm>**@{-}
     ,"O"+(0,1)="U"*{\bullet}*!RD{(i,j\!+\!1)\;}
     ,"U";"U"+<2cm,0cm>**@{-}
     ,"U";"U"+<-2cm,0cm>**@{-}
     ,"U";"U"+<-2cm,-2cm>**@{-}
     ,"U";"U"+<2cm,2cm>**@{-}
     ,"U";"U"+<0cm,2cm>**@{-}
     ,"U"+(1,0)="D"*{\bullet}*!RD{(i\!+\!1,j\!+\!1)\;}
     ,"D";"D"+<2cm,0cm>**@{-}
     ,"D";"D"+<0cm,-2cm>**@{-}
     ,"D";"D"+<0cm,2cm>**@{-}
     ,"D";"D"+<2cm,2cm>**@{-}
     ,"O"+(1,0)="R"*{\bullet}*!RD{(i\!+\!1,j)\;}
     ,"R";"R"+<2cm,0cm>**@{-}
     ,"R";"R"+<0cm,-2cm>**@{-}
     ,"R";"R"+<-2cm,-2cm>**@{-}
     ,"R";"R"+<2cm,2cm>**@{-}
     ,"R"+(0,-1)="RD"
     ,"RD";"RD"+<2cm,2cm>**@{-}
     ,"U"+(-1,0)="UL"
     ,"UL";"UL"+<2cm,2cm>**@{-}
           }
 \end{xy}
\]
\caption{Conventions for the co-ordinate system of the spin sites $ (i,j) $ and couplings $ K_1, K_2, K_3 $ for the homogeneous,
anisotropic triangular lattice Ising model.}
\label{TRaxes}
\end{figure}

The probability density function for a configuration $ \{\sigma_{i,j}\}_{i,j=-L}^{L} $ is the product of Boltzmann
weights along the three axes  
\begin{multline}
 {\mathbb P}[\{\sigma_{i,j}\}_{i,j=-L}^{L}] = \frac{1}{Z_{2L+1}}
     \exp \Big[ K_1 \sum_{j=-L}^L \sum_{i=-L}^{L-1}\sigma_{i,j}\sigma_{i+1, j}
               + K_2 \sum_{i=-L}^L \sum_{j=-L}^{L-1}\sigma_{i,j}\sigma_{i, j+1} \\
                + K_3 \sum_{i=-L}^{L-1} \sum_{j=-L}^{L-1}\sigma_{i,j}\sigma_{i+1, j+1} \Big] ,
\end{multline}
where $ Z_{2L+1} $ is the partition function.
Averages such as the order parameter or average magnetisation are defined by
\begin{equation}
  \langle \sigma_{0,0} \rangle
  = \sum_{\{\sigma_{i,j}\}\in (-1,1)^{2L+1}} \sigma_{0,0}{\mathbb P}[\{\sigma_{i,j}\}_{i,j=-L}^{L}] ,
\end{equation}
however our interest will lie in the spin-spin correlation function
\begin{equation}
  \langle \sigma_{0,0}\sigma_{N,N} \rangle
  = \sum_{\{\sigma_{i,j}\}\in (-1,1)^{2L+1}} \sigma_{0,0}\sigma_{N,N}{\mathbb P}[\{\sigma_{i,j}\}_{i,j=-L}^{L}] .
\end{equation} 
In all the averages we will take the thermodynamic limit, so for example  
$ \lim_{L\to \infty}\langle \sigma_{0,0} \rangle=\langle \sigma \rangle $. 
Our independent variables will either be the ordered triple $ (z_1,z_2,z_3) $ or $ (v_1,v_2,v_3) $
which are related to the coupling constants $ K_{i}, i=1,2,3 $ by
\begin{equation}
  z_i = \tanh K_i = \frac{1-v_i}{1+v_i}, \quad v_i=e^{-2K_i} = \frac{1-z_j}{1+z_j}, 
\end{equation}
where the physical variables $ z_i \in [-1,1] $, $ v_i \in [0,\infty) $ for $ i=1,2,3 $\footnote{
This notation is the reverse of that adopted by Stephenson whereas our conventions conform to common usage, see \cite{MW_1973}.}.
In our characterisation of these correlations as a classical solution to the Garnier equations we will
see that they are meaningful for complex values of the variables 
$ K_1, K_2, K_3 \in \C\cup\{\infty\} $ modulo certain restrictions and in fact this is the natural setting for these systems.

From the symmetries of the correlations, see Eq. (1.6) of \cite{Ste_1970b}, we have the Toeplitz matrix element
\begin{align}
   w_n(K_1,K_2,K_3)
   & = w_n(-K_1,-K_2,K_3) ,\\
   & = (-1)^{n+1}w_n(-K_1,K_2,-K_3) ,\\
   & = (-1)^{n+1}w_n(K_1,-K_2,-K_3) ,
\end{align}
from which Stephenson inferred a classification with two classes: Class A which can be transformed to a completely
ferromagnetic lattice $ K_1, K_2, K_3 > 0 $ and class B which can be transformed to a completely
antiferromagnetic lattice $ K_1, K_2, K_3 < 0 $. 
In the class A system or ferromagnetic system there is a critical point, the {\it Curie point} $ T_C $, given by
\begin{equation}
  1+z_1 z_2 z_3 = z_1+z_2+z_3+z_1 z_2+z_1 z_3+z_2 z_3 ,\quad v_1v_2+v_1v_3+v_2v_3 = 1 ,
\label{CuriePt}
\end{equation} 
and exhibits a phase transition with a low temperature, $ 0 < T < T_C $ , ordered phase where the spins are aligned
$ \langle \sigma \rangle > 0 $ and a high temperature, $ T_C < T < \infty $, disordered phase with no alignment 
$ \langle \sigma \rangle = 0 $. By contrast, in the antiferromagnetic class or class B systems there are two critical points,
the {\it N\'eel point} $ T_N $
\begin{equation}
  1+z_1 z_2 z_3 = -z_1-z_2+z_3+z_1 z_2-z_1 z_3-z_2 z_3 ,\quad v_1v_2-v_1v_3-v_2v_3 = 1 ,
\label{NeelPt}
\end{equation} 
and the {\it disorder point} $ T_D > T_N $ 
\begin{equation}
  z_3+z_1 z_2 = 0, \quad -v_1v_2+v_1v_3+v_2v_3 = 1 .
\label{DisorderPt}
\end{equation} 
In the low-temperature regime $ 0 < T < T_N $ the system exhibits antiferromagnetic long-range-order along the two lattice
axes with the strongest $ K $ values and ferromagnetic order along the third. There is now an intermediate regime
$ T_N < T < T_D $ in which there is antiferromagnetic short-range-order along the two axes with the largest absolute 
values of $ K $ and ferromagnetic short-range-order along the third. In the high temperature regime $ T_D < T < \infty $
there is exponential decay of the pair correlations along all the axes. There is a clear exposition of the 3-D phase diagram
in \cite{Bug_1996} with respect to the co-ordinates $ (K_1, K_2, K_3) \in \R^3 $, where it is clear there exist rays
emanating from the origin with no critical point for $ T>0 $, i.e. the system is disordered down to $ T=0 $ such as 
in the isotropic anti-ferromagnetic case $ K_1=K_2=K_3< 0 $.

The spin-spin correlations along the diagonal have the Toeplitz determinant form given by Eqs. (6.10) and (6.12)
derived by Stephenson \cite{Ste_1964}, or Eq. (1.4) of \cite{Ste_1970b},
using the methods of Montroll, Potts and Ward \cite{MPW_1963}
\begin{equation}
  \langle \sigma_{0,0}\sigma_{N,N} \rangle^{\triangle} = \det[w_{j-k}]_{j,k=0,\dots,N-1} ,
\label{ssDiag}
\end{equation}
where the Toeplitz matrix element is the trigonometric integral
\begin{equation}
   w_{n} = \int^{\pi}_{-\pi}\frac{d\theta}{2\pi}\frac{a \cos(n\theta)-b \cos((n-1)\theta)-c \cos((n+1)\theta)}{\sqrt{a^2+b^2+c^2-2 a (b+c) \cos(\theta )+2 b c \cos(2 \theta )}}, \quad n \in \Z ,
\label{Telement}
\end{equation} 
with parameters $ a,b,c $ depending in the three couplings $ z_1, z_2, z_3 $
\begin{equation}
  a = 2z_3(1+z^2_1)(1+z^2_2)+4z_1z_2(1+z^2_3), \quad b = z^2_3c = z^2_3(1-z^2_1)(1-z^2_2) .
\end{equation}
For our purposes we note the essential feature that these matrix elements are the Fourier coefficients of the weight
\begin{equation}
  w(\zeta) \equiv \sum^{\infty}_{n=-\infty}w_{n}\zeta^n = \left(\frac{a-b\zeta-c\zeta^{-1}}{a-b\zeta^{-1}-c\zeta}\right)^{1/2} .
\label{ssSymbol}
\end{equation}

The above general result subsumes a number of special cases which are of interest in their own right:
\begin{enumerate}
\item
 Diagonal correlations on the square lattice $ K_3=0 $,
 \begin{equation}
  \langle \sigma_{0,0}\sigma_{N,N} \rangle^{\square} = \langle \sigma_{0,0}\sigma_{N,N} \rangle^{\triangle} ,
 \label{DiagCorrSL}
 \end{equation}
\item
 Row correlations on the square lattice $ K_1=0 $,
 \begin{equation}
   \langle \sigma_{0,0}\sigma_{N,0} \rangle^{\square}
   = \left. \langle \sigma_{0,0}\sigma_{N,N} \rangle^{\triangle} \right|_{K_3\mapsto K_1} ,
 \label{RowCorrSL}
 \end{equation}
\item
 Column correlations on the square lattice $ K_2=0 $,
 \begin{equation}
   \langle \sigma_{0,0}\sigma_{0,N} \rangle^{\square} 
   = \left. \langle \sigma_{0,0}\sigma_{N,N} \rangle^{\triangle} \right|_{K_3\mapsto K_2} ,
 \label{ColCorrSL}
 \end{equation}
 \item
 Curie Point $ T=T_C $,
 \item
 N\'eel Point $ T=T_N $,
 \item
 Disorder Point $ T=T_D $.
\end{enumerate}

These correlations are governed by a Garnier system because of the simple observation: the weight function is a regular semi-classical 
weight, which is made precise below, and are thus these Toeplitz determinants are known to be $ \tau $-functions for such
equations \cite{Mag_1995a,FW_2006c}.
In fact we are going to heavily employ results from the theory of Garnier system specialised to the {\it classical case}\footnote{
The term classical used in Painlev\'e theory is distinct from that employed in orthogonal polynomial theory 
and refers to the fact that the parameters of the Garnier equation are located on a Weyl chamber wall, i.e. a particular 
integrality condition applies to the parameters and the monodromy matrices are either lower/upper triangular or trivial.},
in the particular setting of bi-orthogonal polynomials and their associated functions defined 
on the unit circle with regular semi-classical weights. The general theory of such 
systems has already been formulated \cite{FW_2006c} for systems with an arbitrary 
number of singularities, following earlier work on orthogonal polynomials on the real line with semi-classical weights in \cite{FIK_1991,Mag_1995a}.

\begin{corollary}[\cite{FW_2006c},\cite{Wit_2009}]\label{GarnierID}
The Toeplitz determinants $ \{ \langle \sigma_{0,0}\sigma_{N,N} \rangle^{\triangle} \}^{\infty}_{N=0} $,
given by \eqref{ssDiag} with \eqref{Telement}, are classical $ \tau $-functions of a Garnier system $ {\mathcal G}_3 $ of type $ L(1^{5};3) $ in three variables formed from the
ratios of any three of $ \{\zeta_j\}^{4}_{j=1} $ to the fourth (assumed non-zero) and whose abbreviated Papperitz-like symbol is
\begin{equation}
      \left\{
\begin{array}{cccccc}
	0 	    & \zeta_1 	    & \zeta_2 	    & \zeta_3 	   & \zeta_4 	  & \infty \\
	\theta_0=N  & \theta_1=-1/2 & \theta_2=-1/2 & \theta_3=1/2 & \theta_4=1/2 & \theta_{\infty}=N 
\end{array}
      \right\} ,
\end{equation}
where the formal monodromy exponent $ \theta_j $ corresponds to the singularity $ \zeta_j $.
\end{corollary}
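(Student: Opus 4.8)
The plan is to identify the Toeplitz symbol \eqref{ssSymbol} as a regular semi-classical weight on the unit circle and then to invoke the general $\tau$-function correspondence for bi-orthogonal polynomials with such weights established in \cite{FW_2006c} and applied in \cite{Wit_2009}, reading the Papperitz data off the explicit weight. First I would clear the $\zeta^{-1}$ terms in \eqref{ssSymbol} by multiplying the radicand above and below by $\zeta$, giving
\[
 w(\zeta)=\left(\frac{-b\zeta^2+a\zeta-c}{-c\zeta^2+a\zeta-b}\right)^{1/2}
 =\left(\frac{b}{c}\right)^{1/2}\frac{(\zeta-\zeta_1)^{1/2}(\zeta-\zeta_2)^{1/2}}{(\zeta-\zeta_3)^{1/2}(\zeta-\zeta_4)^{1/2}},
\]
where $\zeta_1,\zeta_2$ and $\zeta_3,\zeta_4$ are the zeros of the numerator and denominator quadratics. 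The structural fact to record is the reciprocal symmetry $\zeta^{2}(-c\zeta^{-2}+a\zeta^{-1}-b)=-b\zeta^{2}+a\zeta-c$, which forces $\{\zeta_3,\zeta_4\}=\{\zeta_1^{-1},\zeta_2^{-1}\}$, hence $\zeta_1\zeta_2\zeta_3\zeta_4=1$ and in fact $w(\zeta)w(1/\zeta)=\mathrm{const}$: this reflection symmetry is why the four movable singularities carry only three independent moduli once $0$ and $\infty$ are held fixed. From the factorised form one reads off that $w$ is analytic and non-vanishing at $\zeta=0$ and $\zeta=\infty$ (local exponents $\rho_0=\rho_\infty=0$), that $\rho_1=\rho_2=\tfrac12$ and $\rho_3=\rho_4=-\tfrac12$, and that $w$ satisfies a Pearson-type relation $W(\zeta)\,\tfrac{d}{d\zeta}\log w(\zeta)=2V(\zeta)$ with $W(\zeta)\propto\zeta(-b\zeta^2+a\zeta-c)(-c\zeta^2+a\zeta-b)$ of degree five and $2V$ of degree at most four, which is precisely the defining data of a semi-classical weight in the sense of \cite{Mag_1995a,FW_2006c}; a short computation of $W\,(\log w)'$ gives $2V(\zeta)\propto\zeta(c-b)\bigl(a\zeta^2-2(b+c)\zeta+a\bigr)$.

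Next I would verify that the weight is \emph{regular} semi-classical, i.e.\ that $\zeta_1,\dots,\zeta_4$ are pairwise distinct and distinct from $0$ and $\infty$. This fails only on exceptional loci in coupling space — two of the $\zeta_j$ collide, or a $\zeta_j$ runs into one of the fixed points — which are exactly the reductions $K_3=0$ (two singularities merge with $0$ and $\infty$, whence the Jimbo--Miwa $\sigma$-Painlev\'e VI of \eqref{JM1980}) and $K_3=\pm\infty$, together with the Curie, N\'eel and disorder points enumerated earlier; for generic couplings the hypothesis holds, and these special cases are then recovered by the standard confluence (coalescence) limits within the Garnier hierarchy.

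With the weight in standard form the corollary is an instance of the structure theorem of \cite{FW_2006c}: for bi-orthogonal polynomials on the unit circle whose regular semi-classical weight has singular set $\{0,\infty\}$ together with $m$ finite non-zero points, the sequence of Toeplitz determinants $\det[w_{j-k}]_{j,k=0}^{N-1}$ of \eqref{ssDiag} is a classical $\tau$-function of the Garnier system ${\mathcal G}_{m-1}$ of type $L(1^{m+1};m-1)$, with formal monodromy exponent $\theta_j=-\rho_j$ at each finite singularity and with the exponents at $0$ and $\infty$ carrying the polynomial degree $N$. Here $m=4$, so the system is ${\mathcal G}_3$ of type $L(1^5;3)$; its three time variables are the positions of the singularities other than $0,1,\infty$, which one obtains from $\zeta_1,\dots,\zeta_4$ by using the residual scaling $\zeta\mapsto\zeta/\zeta_4$ — under which the Toeplitz determinants are unchanged, the row and column rescalings cancelling — to normalise one of the four to $1$; this is the passage to "the ratios of any three of $\{\zeta_j\}_{j=1}^{4}$ to the fourth" of the statement, the choice of index being immaterial since distinct choices yield equivalent (relabelled) Garnier systems. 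Feeding in the local data of the first step gives $\theta_1=\theta_2=-\tfrac12$ and $\theta_3=\theta_4=\tfrac12$; the reflection $\zeta\mapsto1/\zeta$, which swaps $0\leftrightarrow\infty$ and under which $w(\zeta)w(1/\zeta)$ is constant, induces a symmetry of the associated linear problem exchanging its data at $0$ and $\infty$, so that $\theta_0=\theta_\infty$, with common value fixed by the relation $\theta_0=n-\rho_0$ recalled above ($n=N$ the polynomial degree, $\rho_0=0$) to be $N$. This is exactly the tabulated Papperitz symbol. Finally, because $\theta_0=\theta_\infty=N$ is the integer degree of the underlying bi-orthogonal polynomials, the parameter tuple lies on a wall of the affine Weyl chamber and the connection/monodromy data of the associated linear problem are triangular: this is the \emph{classical} case referred to in the statement, which is what pins the $\tau$-sequence to a classical rather than a transcendental solution.

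I expect the main obstacle to be the bookkeeping in this last step — reconciling the normalisation of the exponents at the fixed points $0$ and $\infty$ used in \cite{FW_2006c,Wit_2009} with the canonical-position conventions of \cite{IKSY_1991} recalled above (the residual gauge freedom $\Psi\mapsto\zeta^{c}\Psi$ and the interchange of exponents between $0$ and $\infty$), and tracking the effect of the rescaling $\zeta\mapsto\zeta/\zeta_4$ on the spectral polynomials $W$, $2V$ and $\Theta_n$ — while the analytic input, namely that \eqref{ssSymbol} is a regular semi-classical weight and the attendant $\tau$-function property, is already available in the cited literature.
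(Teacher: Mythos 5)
Your proposal is correct and follows essentially the same route as the paper: recognise \eqref{ssSymbol} as a regular semi-classical weight with residues $\rho_1=\rho_2=\tfrac12$, $\rho_3=\rho_4=-\tfrac12$ and exponent $N$ at the fixed singularities $0,\infty$, check that the four finite non-zero singularities are generically distinct (degenerating only at the Curie, N\'eel and disorder points and the boundary/reduction cases), and then invoke the $\tau$-function correspondence of \cite{FW_2006c}, \cite{Wit_2009}. The only cosmetic differences are that the paper carries out the separation analysis via explicit sub-resultants in the $(\Gamma,\Delta,\bar\Delta)$ parametrisation and explains the reduction to three time variables by dilating the integration contour, whereas you use the (equivalent) invariance of the Toeplitz determinant under the rescaling $\zeta\mapsto\zeta/\zeta_4$.
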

\begin{proof}
The logarithmic derivative of the weight function \eqref{ssSymbol} has the form
\begin{equation}
   \frac{1}{w(\zeta)}\frac{d}{d\zeta}w(\zeta) = \frac{2V(\zeta)}{W(\zeta)} ,
\label{SCweight}
\end{equation} 
where $ W(\zeta), 2V(\zeta) $ are irreducible polynomials in $ \zeta $. It is clear that 
\begin{gather}
   W(\zeta) = (\zeta-\zeta_1)(\zeta-\zeta_2)(\zeta-\zeta_3)(\zeta-\zeta_4) = \zeta^4-e_1\zeta^3+e_2\zeta^2-e_3\zeta+e_4 ,
\label{m=4_W} \\
  2V(\zeta) = W\sum^{4}_{j=1}\frac{\rho_j}{\zeta-\zeta_j} = m_0\zeta^3-m_1\zeta^2+m_2\zeta-m_3 ,
\label{m=4_2V}
\end{gather}
where the weight data is represented in the following way when the four singularities $ \zeta_j, j=1,2,3,4 $ are in generic positions
and one really has the case of five finite singularities $ M=5 $ as the one at the origin is always present
(as is also the one at infinity under generic circumstances).
Here $ V(z) $, $ W(z) $ are irreducible polynomials that must satisfy the following generic conditions 
of the regular semi-classical class:
\begin{enumerate}
 \item[(i)]
  $ {\rm deg}\;(W) = M \geq 2 $,
 \item[(ii)]
  $ {\rm deg}\;(V) < {\rm deg}\;(W) $,
 \item[(iii)]
  the $ M $ zeros of $ W(z) $, $ \{z_0, \ldots ,z_{M-1}\} $ are pair-wise distinct, and
 \item[(iv)]
  the residues $ \rho_j = 2V(z_j)/W'(z_j) \notin \Z_{\geq 0} $.
\end{enumerate} 
The above generic case defines a system of bi-orthogonal polynomials and their associated functions on the unit circle,
and these satisfy the same second-order linear differential equation which is represented by the following abbreviated
Riemann-Papperitz-like symbol
\begin{equation}
\left\{
\begin{array}{cccccc}
	0 & \zeta_1 & \zeta_2 & \zeta_3 & \zeta_4 & \infty \\
	\theta_0=n & \theta_1=-\rho_1 & \theta_2=-\rho_2 & \theta_3=-\rho_3 & \theta_4=-\rho_4 & \theta_{\infty}=2n+1-\rho_{\infty} 
\end{array}
\right\} .
\end{equation}
Here $ e_4 \neq 0 $ and (\ref{m=4_W}) and (\ref{m=4_2V}) still apply.
In our example $ M=5 $, with four finite, non-zero regular singularities
\begin{equation}
\left\{
\begin{array}{cccccc}
   0 & \zeta_1 & \zeta_2 & \zeta_3 & \zeta_4 & \infty \\
   n & \rho_1=1/2 & \rho_2=1/2 & \rho_3=-1/2 & \rho_4=-1/2 & n
\end{array}
\right\} ,
\end{equation}
and thus one of the conditions on the weight, $ \rho_j \notin \Z $, is satisfied.

The other condition concerns the separation of the singularities $ \zeta_i \neq \zeta_j $ for $ i \neq j $.
The four singularities are governed by a number of parameters: the discriminant $ \mathcal{D} $
\begin{equation}
   \mathcal{D}^2 \coloneqq (z_1+z_2 z_3) (z_2+z_1 z_3) (z_3+z_1 z_2) (1+z_1 z_2 z_3),
\end{equation} 
the additional auxiliary variables
\begin{equation}
  \Gamma \coloneqq 1+v_1^2 v_2^2-(v_1^2+v_2^2) v_3^2 ,\quad 
  \mathcal{D} \coloneqq \frac{4\Delta}{(1+v_1)^2(1+v_2)^2(1+v_3)^2} , \quad
  \bar{\Delta}^2 \coloneqq \frac{(1+v_3^2)^2\Delta^2-4v_3^2\Gamma^2}{(1-v_3^2)^2} ,
\end{equation} 
so that
\begin{equation}
 \Delta^2 \coloneqq (1+v_1 v_2-v_1 v_3-v_2 v_3) (1-v_1 v_2-v_1 v_3+v_2 v_3) (1-v_1 v_2+v_1 v_3-v_2 v_3) (1+v_1 v_2+v_1 v_3+v_2 v_3) ,
\end{equation} 
and
\begin{equation}
  \bar{\Delta}^2 \coloneqq (1-v_1 v_2+v_1 v_3+v_2 v_3) (1+v_1 v_2+v_1 v_3-v_2 v_3) (1+v_1 v_2-v_1 v_3+v_2 v_3) (1-v_1 v_2-v_1 v_3-v_2 v_3) .
\end{equation} 
The singularities have the explicit forms
\begin{equation}
   \zeta_1 = \frac{\Gamma+\Delta}{2v_1v_2(1-v_3)^2} ,\;
   \zeta_2 = \frac{\Gamma-\Delta}{2v_1v_2(1-v_3)^2} ,\;
   \zeta_3 = \frac{\Gamma+\Delta}{2v_1v_2(1+v_3)^2} ,\;
   \zeta_4 = \frac{\Gamma-\Delta}{2v_1v_2(1+v_3)^2} .
\label{singularity}
\end{equation}
Furthermore we note the relations between the singularities
\begin{equation}
  \zeta_1\zeta_2\zeta_3\zeta_4 = 1, \quad
  \zeta_3 = z_3^2 \zeta_1 = \zeta_2^{-1}, \quad \zeta_4 = z_3^2 \zeta_2 = \zeta_1^{-1} .
\end{equation}

The sub-resultants of the numerator and denominator polynomials in \eqref{ssSymbol} are proportional to $ v_1^2 v_2^2 v_3^2 \bar{\Delta}^2 $
and $ v_1 v_2 v_3 \Gamma $. The Curie point \eqref{CuriePt}, the three equivalent variations of the Ne\'el point \eqref{NeelPt}
and the degenerate case of $ T=0 $ account for the vanishing of the first of these sub-resultants, whereby a single cancellation
of a common factor in the numerator and denominator occurs and reduces the number of singularities by two. Examining the 
sub-resultants of the numerator polynomial and its derivative we find $ v_1 v_2 (1-v_3)^2 \Delta^2 $ and $ v_1 v_2 (1-v_3)^2 $.
Proceeding in the same way with the denominator polynomial and its derivative we find $ v_1 v_2 \Delta^2 $ 
and $ v_1 v_2 $. These last two cases include the Disorder point \eqref{DisorderPt} and its three equivalent versions,
as well as the case of $ T=\infty $. Thus in contrast to the generic case
the three critical points and the boundary points $ T = 0 $, $ T = \infty $, correspond to situations where the
singularities coalesce in a pair-wise manner in the following way: \\ 

\noindent
\begin{center}

\begin{tabular}[]{|c|c|}
\hline
 Curie or N\'eel Point &
 \begin{minipage}[c][3cm][c]{7cm}{
    \begin{equation}
       |\zeta_1| = \frac{1}{z_3^2} > |\zeta_2| = |\zeta_3| = 1 >  |\zeta_4| = z_3^2
    \end{equation} 
    or
    \begin{equation}
       |\zeta_2| = \frac{1}{z_3^2} > |\zeta_1| = |\zeta_4| = 1 >  |\zeta_3| = z_3^2
    \end{equation}}
 \end{minipage}
\\ \hline  
 $ T = 0 $ &
 \begin{minipage}[c][1cm][c]{8cm}{
 \begin{equation}
   \zeta_1 = \zeta_3 \gtrless \zeta_2 = \zeta_4 
 \end{equation}}
 \end{minipage}
\\ \hline  
 Disorder Point &  
 \begin{minipage}[c][13mm][c]{8cm}{
 \begin{equation}
   |\zeta_1| = |\zeta_2| = \frac{1}{|z_3|} > 1 > |\zeta_3| = |\zeta_4| = |z_3|
 \end{equation}}
 \end{minipage}
\\ \hline 
 $ T = \infty $ &
 \begin{minipage}[c][1cm][c]{8cm}{
 \begin{equation}
   \zeta_1 = \zeta_3 \gtrless \zeta_2 = \zeta_4 
 \end{equation}}
 \end{minipage}
\\
\hline
\end{tabular}
 
\end{center}

The reason why there are only three independent variables is that the integration contour defining $ w_n $ from the weight
can be contracted or dilated from $ |\zeta| = 1 $ to one of the nearby singularity moduli (keeping the fixed singularities at 
$ \zeta = 0, \infty $ unchanged) which has the effect of normalising the remaining three by this particular one.
\end{proof}

Our first result of the identification of a Garnier system with the spin-spin correlations is a simple recurrence
relation for the Toeplitz matrix elements. 
\begin{corollary}[\cite{Wit_2009}]\label{LinearRR}
The Fourier coefficients or Toeplitz matrix elements $ w_{n} $ satisfy the fourth order, linear homogeneous difference
equation in the index $ n $
\begin{multline}
   (n-3) v_1^2 v_2^2(1-v_3^2)^2 w_{n-3}
   -2 v_1 v_2 \Gamma\left[ v_3+(n-2)(1+v_3^2) \right]w_{n-2}
\\
   + \left[ (n-1) \left(v_1^4+4 v_2^2 v_1^2+v_2^4\right) v_3^4-2 (n-1) \left( v_1^2+v_2^2-6 v_1^2v_2^2+v_1^4v_2^2+v_1^2v_2^4 \right) v_3^2 \right. 
\\
     \left. +(n-1) \left(1+4 v_1^2 v_2^2+v_1^4 v_2^4\right)+8v_1^2 v_2^2 v_3(1+v_3^2) \right]w_{n-1}
\\
   -2 v_1 v_2 \Gamma\left[v_3+n(1+v_3^2)\right]w_{n}
   +(n+1) v_1^2 v_2^2(1-v_3^2)^2 w_{n+1} = 0 .
\label{LRR_triangle}
\end{multline}
In addition for $ n = -1, 3 $ the order drops to third order.
One could take any contiguous set of four elements including $ w_{0} $ as the initial values and iterate in either
direction.
\end{corollary}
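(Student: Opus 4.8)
The plan is to derive \eqref{LRR_triangle} directly from the fact, established in the proof of Corollary~\ref{GarnierID}, that $w(\zeta)$ is a regular semi-classical weight — i.e. it obeys the first-order linear differential equation $W(\zeta)w'(\zeta)=2V(\zeta)w(\zeta)$ of \eqref{SCweight} — together with the contour-integral representation of its Fourier coefficients. The mechanism is exactly the ``string-equation'' or ladder-operator argument familiar from semi-classical orthogonal polynomials, adapted to the unit circle, and the resulting recurrence is the one recorded in \cite{Wit_2009}.

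First I would write $w_n=\oint_{|\zeta|=1}\frac{d\zeta}{2\pi i}\,\zeta^{n-1}w(\zeta)$ and fix the two polynomials explicitly: clearing denominators in \eqref{ssSymbol} one has, up to a common scalar, $W(\zeta)=(a\zeta-b\zeta^{2}-c)(a\zeta-c\zeta^{2}-b)$, a quartic, and $2V(\zeta)$ the associated numerator polynomial of \eqref{SCweight}, whose $\zeta^{3}$ coefficient cancels so that $\deg 2V\le 2$; equivalently this is the statement $m_0=\rho_1+\rho_2+\rho_3+\rho_4=\tfrac12+\tfrac12-\tfrac12-\tfrac12=0$ coming from the residue data in Corollary~\ref{GarnierID}. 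Two structural features are needed below. (a) Because $w(1/\zeta)w(\zeta)=1$, both $W$ and $2V$ are \emph{self-reciprocal}, $\zeta^{4}W(1/\zeta)=W(\zeta)$ and $\zeta^{2}\cdot 2V(1/\zeta)=2V(\zeta)$; in particular the constant and leading coefficients of $W$ agree (call the common value $e_4$), and $e_1=e_3$, $m_1=m_3$. (b) In generic position the four branch points $\zeta_1,\dots,\zeta_4$ lie off the unit circle and, by $\zeta_1\zeta_4=\zeta_2\zeta_3=1$, fall two inside and two outside it, so one may place branch cuts joining the two interior branch points (resp. the two exterior ones) and conclude that $w$ is single-valued and analytic on an open annulus containing $|\zeta|=1$.

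The core step is then to integrate an exact differential. For every $k\in\Z$ the function $\zeta^{k}W(\zeta)w(\zeta)$ is single-valued and analytic on that annulus, hence
\[
  0=\oint_{|\zeta|=1}\frac{d}{d\zeta}\Big[\zeta^{k}W(\zeta)w(\zeta)\Big]\,d\zeta
   =\oint_{|\zeta|=1}\Big(k\zeta^{k-1}W+\zeta^{k}W'+2\zeta^{k}V\Big)w\,d\zeta ,
\]
where the ODE was used to replace $\zeta^{k}Ww'$ by $2\zeta^{k}Vw$. The bracket is a Laurent polynomial supported on the monomials $\zeta^{k-1},\dots,\zeta^{k+3}$ (top degree $k+3$ because $\deg 2V\le 2$, lowest power $k-1$ because $W(0)=e_4\ne 0$ generically); replacing each $\zeta^{j}$ under the integral by $w_{j+1}$ turns the identity into a linear relation among the five coefficients $w_{k},\dots,w_{k+4}$. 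Setting $k=n-3$ gives exactly the shape of \eqref{LRR_triangle}: the coefficient of $w_{n+1}$ is $(k+4)=(n+1)$ times the leading coefficient of $W$, the coefficient of $w_{n-3}$ is $k=(n-3)$ times $W(0)$, and by reciprocity (a) these two share the prefactor, which one computes to be $v_1^2v_2^2(1-v_3^2)^2$; the two outer brackets on $w_{n}$ and $w_{n-2}$ arise from the $\zeta^{k}W'+2\zeta^{k}V$ coefficients and coincide as one function of their index shifted by two, again by $e_1=e_3$, $m_1=m_3$; and the central coefficient of $w_{n-1}$ is the remaining $(n-1)e_2+m_2$. Finally the order-reduction statement is immediate: the $w_{n+1}$-coefficient $(n+1)e_4$ vanishes at $n=-1$ and the $w_{n-3}$-coefficient $(n-3)e_4$ vanishes at $n=3$, so at those two values of $n$ the five-term relation collapses to four terms; and since every coefficient is polynomial in $n$, the recurrence — derived first for generic couplings — extends to all $n\in\Z$ and, by analytic continuation in the $z_i$, to the whole parameter range.

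The genuinely delicate point is the one isolated in (b): one must be sure the exact-differential integral really vanishes, which needs $w$ to be single-valued along and near $|\zeta|=1$, and this is precisely the generic separation of the singularities $\zeta_j$ guaranteed by Corollary~\ref{GarnierID} together with the reciprocal pairing $\zeta_1\zeta_4=\zeta_2\zeta_3=1$ that lets the branch cuts be drawn clear of the contour. Everything else — the explicit evaluation of $W$, $W'$ and $2V$ and the book-keeping that converts $\zeta^{k}W'+2\zeta^{k}V$ and $\zeta^{k-1}W$ into the bracketed polynomials in $v_1,v_2,v_3,\Gamma$ appearing in \eqref{LRR_triangle} — is routine, if a little tedious, and I would not carry it out in detail here.
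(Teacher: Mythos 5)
Your overall strategy is sound and is in substance the same as the paper's: the paper simply quotes the general five-term moment recurrence $\sum_{l=-1}^{3}(-1)^{3-l}\left[(p-l)e_{3-l}-m_{3-l}\right]w_{p-l}=0$ from \cite{Wit_2009} and substitutes the spectral data \eqref{Sdata:1}--\eqref{Sdata:6}, and that quoted identity is derived by exactly the integration-by-parts argument $0=\oint d\left[\zeta^{k}W(\zeta)w(\zeta)\right]$ that you reconstruct. Your structural observations (self-reciprocity of $W$ and $2V$, hence $e_1=e_3$, $m_1=m_3$, $e_4=1$ and $m_0=0$; two branch points inside and two outside the unit circle so that $w$ is single-valued on an annulus containing the contour) are all correct and are precisely what makes the exact-differential step legitimate.

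There is, however, a concrete slip that would derail the final bookkeeping you declined to carry out. With the paper's convention $w(\zeta)=\sum_n w_n\zeta^n$, the inversion formula is $w_n=\oint\frac{d\zeta}{2\pi i}\,\zeta^{-n-1}w(\zeta)$, so under the integral $\zeta^{j}$ must be replaced by $w_{-j-1}$, not by $w_{j+1}$ as you state. Your substitution computes the recurrence satisfied by the reversed sequence $w_{-n}$, which are the moments of $1/w(\zeta)=w(1/\zeta)$ and hence obey the recurrence with $2V$ replaced by $-2V$, i.e.\ with every $m_j$ sign-reversed. Concretely, your version of \eqref{LRR_triangle} would carry $\left[-v_3+n(1+v_3^2)\right]$ and $-8v_1^2v_2^2v_3(1+v_3^2)$ in place of $\left[v_3+n(1+v_3^2)\right]$ and $+8v_1^2v_2^2v_3(1+v_3^2)$; since $b=z_3^2c\neq c$ generically, $w_n\neq w_{-n}$ and this is genuinely a different recurrence. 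The error is invisible in the two outermost coefficients, because the coefficient of the extreme moments is $(k+4)$ times the leading coefficient of $W$ and $k$ times $W(0)$, and these agree by the very palindromy you invoke — which is why your consistency check on the $(n+1)$ and $(n-3)$ prefactors passes while the $\Gamma$-brackets and the central term would come out wrong. The fix is immediate: use $w_n=\oint\frac{d\zeta}{2\pi i}\,\zeta^{-n-1}w(\zeta)$ and take $k=-n-1$, so that the five monomials $\zeta^{k-1},\dots,\zeta^{k+3}$ produce exactly $w_{n+1},\dots,w_{n-3}$ with the coefficients $(n+1)e_4$, $-\left[ne_3-m_3\right]$, $(n-1)e_2-m_2$, $-\left[(n-2)e_1-m_1\right]$, $(n-3)e_0$, which upon inserting \eqref{Sdata:1}--\eqref{Sdata:6} and clearing the normalisation $v_1^2v_2^2(1-v_3^2)^2$ is \eqref{LRR_triangle}. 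The order-reduction remark at $n=-1,3$ then stands as you argue.
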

\begin{proof}
This follows from an adaptation of Eq. (4.36) of \cite{Wit_2009} to the case where $ e_4 \neq 0 $
\begin{equation}
 \sum_{l=-1}^3 (-1)^{3-l} \left[ (p-l) e_{3-l}-m_{3-l} \right] w_{p-l} = 0, \quad p \in \Z ,
\end{equation}
and the spectral data ($ e_0=1 $)
\begin{align}
   e_1 = e_3 & = -2\frac{(1+v_3^2) \left(\left(v_1^2+v_2^2\right)v_3^2-v_1^2v_2^2-1 \right)}{v_1 v_2 (1-v_3^2)^2} ,
\label{Sdata:1}\\
   e_2 & = \frac{\left(v_1^4+4 v_2^2 v_1^2+v_2^4\right)v_3^4-2 \left(v_2^2 v_1^4+v_2^4v_1^2-6 v_1^2v_2^2+v_1^2+v_2^2\right)v_3^2+v_1^4 v_2^4+4 v_1^2 v_2^2+1}
                {v_1^2 v_2^2 (1-v_3^2)^2} ,
\label{Sdata:2}\\
   e_4 & = 1 ,
\label{Sdata:3}\\
  m_0 & = 0 ,
\label{Sdata:4}\\
  m_1 = m_3 & = 2\frac{v_3 \left(\left(v_1^2+v_2^2\right) v_3^2-v_1^2 v_2^2-1\right)}{v_1 v_2 (1-v_3^2)^2} ,
\label{Sdata:5}\\
  m_2 & = -8\frac{v_3(1+v_3^2)}{(1-v_3^2)^2} .
\label{Sdata:6}
\end{align}
Note that $ w_0 $ cannot be determined by this recurrence by any set of $ w_n $ either $ n<0 $ or $ n>0 $ and is an arbitrary
normalisation of the weight. 
\end{proof}

However the theory of discrete Garnier systems developed in \cite{Wit_2009} furnishes a direct recurrence relation
system for
the Toeplitz determinants themselves, and our second and more significant result is such a system for our application.
Prior to stating these we need define appropriate co-ordinates by choosing one of the singularities, say $ \xi_4 $ 
without any loss of generality 
\begin{equation}
	f^{j}_{n} \equiv \frac{\xi_j}{\xi_4}\frac{\Theta_n(\xi_j)}{\Theta_n(\xi_4)} ,
	\quad g^{j}_{n} = \omega^{j-1}_{n}, \quad j=1,2,3 .
\label{ReccVars}
\end{equation}
The variables $ \Theta_n(z) $ and $ \omega^{j}_{n} $ arise in the parameterisation of the derivatives of the bi-orthogonal
polynomials and associated functions with respect to $ z $, and is termed the {\it spectral structure} of the isomonodromic
system. Further details can be found in \cite{FW_2006c}, \cite{Wit_2009}.

As a preliminary step we need to make the following definitions in order to render the results in the simplest possible form.
\begin{definition}
Let us define four auxiliary variables
\begin{align}
 \mathcal{R}_1 & \coloneqq \frac{(\Gamma-\Delta)}{2 v_1 v_2 (1+v_3)^2}g^{1}_{n}+\left(\frac{\Gamma-\Delta}{2 v_1 v_2 (1+v_3)^2}\right)^2g^{2}_{n}
                   + \left(\frac{\Gamma-\Delta}{2 v_1 v_2 (1+v_3)^2}\right)^3g^{3}_{n}+\left(\frac{\Gamma-\Delta}{2 v_1 v_2 (1+v_3)^2}\right)^4 ,
\label{Rdefn:1}\\
 \mathcal{R}_2 & \coloneqq \frac{(\Gamma+\Delta)}{2 v_1 v_2 (1+v_3)^2}g^{1}_{n}+\left(\frac{\Gamma+\Delta}{2 v_1 v_2 (1+v_3)^2}\right)^2g^{2}_{n}
                   + \left(\frac{\Gamma+\Delta}{2 v_1 v_2 (1+v_3)^2}\right)^3g^{3}_{n}+\left(\frac{\Gamma+\Delta}{2 v_1 v_2 (1+v_3)^2}\right)^4 ,
\label{Rdefn:2}\\
 \mathcal{R}_3 & \coloneqq \frac{(\Gamma+\Delta)}{2 v_1 v_2 (1-v_3)^2}g^{1}_{n}+\left(\frac{\Gamma+\Delta}{2 v_1 v_2 (1-v_3)^2}\right)^2g^{2}_{n}
                   + \left(\frac{\Gamma+\Delta}{2 v_1 v_2 (1-v_3)^2}\right)^3g^{3}_{n}+\left(\frac{\Gamma+\Delta}{2 v_1 v_2 (1-v_3)^2}\right)^4 ,
\label{Rdefn:3}\\
 \mathcal{R}_4 & \coloneqq \frac{(\Gamma-\Delta)}{2 v_1 v_2 (1-v_3)^2}g^{1}_{n}+\left(\frac{\Gamma-\Delta}{2 v_1 v_2 (1-v_3)^2}\right)^2g^{2}_{n}
                   + \left(\frac{\Gamma-\Delta}{2 v_1 v_2 (1-v_3)^2}\right)^3g^{3}_{n}+\left(\frac{\Gamma-\Delta}{2 v_1 v_2 (1-v_3)^2}\right)^4 .
\label{Rdefn:4}
\end{align}
Furthermore we define another set of four auxiliary variables
\begin{multline}
  \mathcal{S}_1 \coloneqq 
        \left( 2\Gamma v_3+\Delta (1+v_3^2) \right) \left[ \frac{(1-v_3)^2}{(1+v_3)^2}(\Gamma+\Delta)-(\Gamma-\Delta) f^{2}_{n} \right]
\\
       +\left( 2\Gamma v_3-\Delta (1+v_3^2) \right) \left[ (\Gamma+\Delta) f^{1}_{n}-\frac{(1-v_3)^2}{(1+v_3)^2}(\Gamma-\Delta) f^{3}_{n} \right] ,
\label{Sdefn:1}
\end{multline}
\begin{multline}
  \mathcal{S}_2 \coloneqq 
        \left( 2\Gamma v_3+\Delta (1+v_3^2) \right) \left[ \frac{(1-v_3)^4}{(1+v_3)^4}(\Gamma+\Delta)^2-(\Gamma-\Delta)^2 f^{2}_{n} \right]
\\
       +\left( 2\Gamma v_3-\Delta (1+v_3^2) \right) \left[ (\Gamma+\Delta)^2 f^{1}_{n}-\frac{(1-v_3)^4}{(1+v_3)^4}(\Gamma-\Delta)^2 f^{3}_{n} \right] ,
\label{Sdefn:2}
\end{multline}
\begin{multline}
  \mathcal{S}_3 \coloneqq 
       \left( 2\Gamma v_3+\Delta (1+v_3^2) \right) \Bigg[ \frac{(1-v_3)^2}{(1+v_3)^2} \left(\Gamma(3v_3^2-2v_3+3)+\Delta(1+v_3)^2\right) (\Gamma+\Delta) \\
              - \left(\Gamma(3v_3^2+2v_3+3)-\Delta(1-v_3)^2\right) (\Gamma-\Delta) f^{2}_{n} \Bigg]
\\
       +\left( 2\Gamma v_3-\Delta (1+v_3^2) \right) \Bigg[ \left(\Gamma(3v_3^2+2v_3+3)+\Delta(1-v_3)^2\right) (\Gamma+\Delta) f^{1}_{n} \\
                          -\frac{(1-v_3)^2}{(1+v_3)^2} \left(\Gamma(3v_3^2-2v_3+3)-\Delta(1+v_3)^2\right) (\Gamma-\Delta) f^{3}_{n}  \Bigg] ,
\label{Sdefn:3}
\end{multline}
\begin{multline}
  \mathcal{S}_4 \coloneqq 
       \left( 2\Gamma v_3+\Delta (1+v_3^2) \right) \Bigg[ \frac{(1-v_3)^4}{(1+v_3)^4} \left(\Gamma(3v_3^2+2v_3+3)-\Delta(1-v_3)^2\right) (\Gamma +\Delta)^2 \\
              - \left(\Gamma(3v_3^2-2v_3+3)+\Delta(1+v_3)^2\right) (\Gamma-\Delta)^2 f^{2}_{n} \Bigg]
\\
       +\left( 2\Gamma v_3-\Delta (1+v_3^2) \right) \Bigg[ \left(\Gamma(3v_3^2-2v_3+3)-\Delta(1+v_3)^2\right) (\Gamma+\Delta)^2 f^{1}_{n} \\
              -\frac{(1-v_3)^4}{(1+v_3)^4} \left(\Gamma(3v_3^2+2v_3+3)+\Delta(1-v_3)^2\right) (\Gamma-\Delta)^2 f^{3}_{n} \Bigg] .
\label{Sdefn:4}
\end{multline} 
\end{definition}

Then one has the following three-variable generalisation of the additive $ D^{(1)}_4 $ member of the Sakai classification
\cite{Sak_2001} otherwise known as the ``fifth discrete Painlev\'e equation'' . Subsequently we will give
explicit relationships between the co-ordinates introduced above and the Toeplitz determinants.
\begin{corollary}[\cite{Wit_2009}]\label{NonlinearRR}
Assume that the singularities $ \zeta_i, \zeta_j $ are pairwise distinct $ i \neq j $, i.e. that $ \Delta \neq 0 $,
$ \bar{\Delta} \neq 0 $ and that $ \mathcal{S}_1, \mathcal{S}_2 \neq 0 $.
The set of variables $ \{ f^{1}_{n},f^{2}_{n},f^{3}_{n},g^{1}_{n},g^{2}_{n},g^{3}_{n} \}^{\infty}_{n=0} $
satisfy the system of coupled, first order non-linear difference equations in $ n \geq 0 $. 
The first set of three constitute the first members of the Lax pair
\begin{equation}
  \frac{(1-v_3)^2}{(1+v_3)^2} f^{1}_{n}f^{1}_{n+1}
   = \frac{\left[ \mathcal{R}_1-n \right]\left[ \mathcal{R}_1-n+\frac{\DySt 2v_3 \Delta}{\DySt v_1^2 v_2^2 (1+v_3)^6}\left( 2\frac{(\DySt \Delta^2-4v_1^2v_2^2v_3 (1+v_3)^2)}
                                                                                                                                   {\DySt \Gamma+\Delta}-\Delta \right) \right]}
          {\left[ \mathcal{R}_4-n \right]\left[ \mathcal{R}_4-n+\frac{\DySt 2v_3 \Delta}{\DySt v_1^2 v_2^2 (1-v_3)^6}\left( 2\frac{(\DySt \Delta^2+4v_1^2v_2^2v_3 (1-v_3)^2)}
                                                                                                                                   {\DySt \Gamma+\Delta}-\Delta \right) \right]} ,
\label{1stLP:1}
\end{equation}
\begin{equation}
  \frac{(1-v_3)^2(\Gamma+\Delta)}{(1+v_3)^2(\Gamma-\Delta)} f^{2}_{n}f^{2}_{n+1}
   = \frac{\left[ \mathcal{R}_2-n \right]\left[ \mathcal{R}_2-n-\frac{\DySt 2v_3 \Delta}{\DySt v_1^2 v_2^2 (1+v_3)^6}\left( 2\frac{\DySt (\Delta^2-4v_1^2v_2^2v_3 (1+v_3)^2)}
                                                                                                                                   {\DySt \Gamma-\Delta}+\Delta \right) \right]}
          {\left[ \mathcal{R}_4-n \right]\left[ \mathcal{R}_4-n+\frac{\DySt 2v_3 \Delta}{\DySt v_1^2 v_2^2 (1-v_3)^6}\left( 2\frac{\DySt (\Delta^2+4v_1^2v_2^2v_3 (1-v_3)^2)}
                                                                                                                                   {\DySt \Gamma+\Delta}-\Delta \right) \right]} ,
\label{1stLP:2}
\end{equation}
\begin{equation}
  \frac{\Gamma+\Delta}{\Gamma-\Delta} f^{3}_{n}f^{3}_{n+1}
   = \frac{\left[ \mathcal{R}_3-n \right]\left[ \mathcal{R}_3-n-\frac{\DySt 2v_3 \Delta}{\DySt v_1^2 v_2^2 (1-v_3)^6}\left( 2\frac{\DySt (\Delta^2+4v_1^2v_2^2v_3 (1-v_3)^2)}
                                                                                                                                   {\DySt \Gamma-\Delta}+\Delta \right) \right]}
          {\left[ \mathcal{R}_4-n \right]\left[ \mathcal{R}_4-n+\frac{\DySt 2v_3 \Delta}{\DySt v_1^2 v_2^2 (1-v_3)^6}\left( 2\frac{\DySt (\Delta^2+4v_1^2v_2^2v_3 (1-v_3)^2)}
                                                                                                                                   {\DySt \Gamma+\Delta}-\Delta \right) \right]} .
\label{1stLP:3}
\end{equation}
The remaining set of coupled, first order non-linear difference equations constitute the three members of the second Lax pair
and are given by $ n \geq 1 $
\begin{multline}
    g^{1}_{n}+g^{1}_{n-1}-\frac{\Gamma}{2v_1v_2(1-v_3)^2} \left[ 2n-1+(2n-3)\frac{(1-v_3)^2}{(1+v_3)^2} \right]
\\
     + \frac{n+1}{2 v_1 v_2(1-v_3)^2} \frac{\mathcal{S}_2}{\mathcal{S}_1}
     - \frac{n}{2v_1v_2(1-v_3^2)^2} \frac{\mathcal{S}_4}{\mathcal{S}_2} = 0 ,
\label{2ndLP:1}
\end{multline}
\begin{multline}
    g^{2}_{n}+g^{2}_{n-1}+\frac{8v_3(1+v_3^2)}{(1-v_3^2)^2}
\\
     + \frac{1+4 v_1^2 v_2^2+v_1^4 v_2^4-2(v_1^2+v_2^2-6v_1^2v_2^2+v_1^4v_2^2+v_1^2v_2^4)v_3^2+(v_1^4+4v_1^2v_2^2+v_2^4)v_3^4}{v_1^2 v_2^2(1-v_3^2)^2}(n-1)
\\
     - \frac{n+1}{4v_1^2v_2^2(1+v_3)^2(1-v_3)^4} \frac{\mathcal{S}_4}{\mathcal{S}_1}
     + \frac{n}{(1+v_3)^2} \frac{\mathcal{S}_3}{\mathcal{S}_2} = 0 ,
\label{2ndLP:2}
\end{multline}
\begin{multline}
    g^{3}_{n}+g^{3}_{n-1}-\frac{\Gamma}{2v_1v_2(1-v_3)^2} \left[ 2n-1+(2n-3)\frac{(1-v_3)^2}{(1+v_3)^2} \right]
\\
     + \frac{n+1}{2v_1 v_2(1-v_3^2)^2} \frac{\mathcal{S}_3}{\mathcal{S}_1}
     - 2nv_1v_2(1-v_3)^2 \frac{\mathcal{S}_1}{\mathcal{S}_2} = 0 .
\label{2ndLP:3}
\end{multline}
These recurrences are subject to the initial values for $ j=1,2,3 $ 
\begin{equation}
     f^{1}_0 = \frac{(1-v_3)^2}{(1+v_3)^2}
	       \frac{\left[ \frac{(\Gamma-\Delta)}{v_1 v_2 (1+v_3)^2}w_{-2}
	                  + \frac{(\Gamma-\Delta)}{4v_1^2v_2^2(1+v_3)^4}\left(\Gamma-\Delta-\frac{4 (1-v_3+v_3^2)}{(1-v_3)^2}\Gamma\right)w_{-1}
	                  - \frac{2v_3\Gamma}{v_1 v_2 (1-v_3^2)^2}w_0 + w_1 \right]}
	            {\left[ \frac{(\Gamma-\Delta)}{v_1 v_2 (1-v_3)^2}w_{-2}
	                  + \frac{(\Gamma-\Delta)}{4v_1^2v_2^2(1-v_3)^4}\left(\Gamma-\Delta-\frac{4 (1-v_3+v_3^2)}{(1+v_3)^2}\Gamma\right)w_{-1}
	                  - \frac{2v_3\Gamma}{v_1 v_2 (1-v_3^2)^2}w_0 + w_1 \right]} ,
\label{initf1} 
\end{equation} 
\begin{multline}
     f^{2}_0 = \frac{(1-v_3)^2(\Gamma+\Delta)}{(1+v_3)^2(\Gamma-\Delta)}
\\ \times
	       \frac{\left[ \frac{(\Gamma+\Delta)}{v_1 v_2 (1+v_3)^2}w_{-2}
	                  + \frac{(\Gamma+\Delta)}{4v_1^2v_2^2(1+v_3)^4}\left(\Gamma+\Delta-\frac{4 (1-v_3+v_3^2)}{(1-v_3)^2}\Gamma\right)w_{-1}
	                  - \frac{2v_3\Gamma}{v_1 v_2 (1-v_3^2)^2}w_0 + w_1 \right]}
	            {\left[ \frac{(\Gamma-\Delta)}{v_1 v_2 (1-v_3)^2}w_{-2}
	                  + \frac{(\Gamma-\Delta)}{4v_1^2v_2^2(1-v_3)^4}\left(\Gamma-\Delta-\frac{4 (1-v_3+v_3^2)}{(1+v_3)^2}\Gamma\right)w_{-1}
	                  - \frac{2v_3\Gamma}{v_1 v_2 (1-v_3^2)^2}w_0 + w_1 \right]} ,
\label{initf2} 
\end{multline}
\begin{equation}
     f^{3}_0 = \frac{(\Gamma+\Delta)}{(\Gamma-\Delta)}
	       \frac{\left[ \frac{(\Gamma+\Delta)}{v_1 v_2 (1-v_3)^2}w_{-2}
	                  + \frac{(\Gamma+\Delta)}{4v_1^2v_2^2(1-v_3)^4}\left(\Gamma+\Delta-\frac{4 (1-v_3+v_3^2)}{(1+v_3)^2}\Gamma\right)w_{-1}
	                  - \frac{2v_3\Gamma}{v_1 v_2 (1-v_3^2)^2}w_0 + w_1 \right]}
	            {\left[ \frac{(\Gamma-\Delta)}{v_1 v_2 (1-v_3)^2}w_{-2}
	                  + \frac{(\Gamma-\Delta)}{4v_1^2v_2^2(1-v_3)^4}\left(\Gamma-\Delta-\frac{4 (1-v_3+v_3^2)}{(1+v_3)^2}\Gamma\right)w_{-1}
	                  - \frac{2v_3\Gamma}{v_1 v_2 (1-v_3^2)^2}w_0 + w_1 \right]} ,
\label{initf3} 
\end{equation} 
and
\begin{align}
	g^{1}_0 & = 2\frac{v_3\Gamma}{v_1v_2(1-v_3^2)^2}-\frac{w_1}{w_0} ,
\label{initg1}\\
	g^{2}_0 & = - 8\frac{v_3(1+v_3^2)}{(1-v_3^2)^2} - 2\frac{v_3\Gamma}{v_1 v_2 (1-v_3^2)^2}\frac{w_0}{w_{-1}} 
	            + 2\frac{(1+v_3+v_3^2)\Gamma}{v_1 v_2 (1-v_3^2)^2}\frac{w_1}{w_0} + \frac{w_1}{w_{-1}} - 2\frac{w_2}{w_0},
\label{initg2}\\
	g^{3}_0 & = 2\frac{w_{-2}}{w_{-1}} - \frac{w_{-1}}{w_0} - 2\frac{(1-v_3+v_3^2)\Gamma}{v_1 v_2(1-v_3^2)^2} .
\label{initg3} 
\end{align}
\end{corollary}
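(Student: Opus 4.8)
The plan is to derive this system as the specialisation, to the spectral data of the triangular Ising weight, of the general discrete Garnier equations of \cite{Wit_2009}. By Corollary \ref{GarnierID} the relevant data is the $ M=5 $ configuration with four finite non-zero singularities $ \zeta_1,\dots,\zeta_4 $ carrying residues $ \rho_1=\rho_2=\tfrac12 $, $ \rho_3=\rho_4=-\tfrac12 $, together with the fixed singularities at $ 0 $ and $ \infty $, and with $ m_0=0 $ so that the constant $ \kappa=-n(1+m_0)=-n $; the spectral polynomials $ W $ and $ 2V $ are then pinned down by \eqref{Sdata:1}--\eqref{Sdata:6} and the singularity positions by \eqref{singularity}. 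The compact statement of the corollary is reached by passing to the fixed-normalisation coordinates \eqref{ReccVars}, in which one of the moving singularities — here $ \xi_4 $ — is singled out as the reference; this is the coordinate incarnation of the contour-contraction remark that closes the proof of Corollary \ref{GarnierID}, and is precisely what cuts the count of independent deformation variables from four down to three.

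First I would dispose of the first Lax-pair recurrences \eqref{1stLP:1}--\eqref{1stLP:3}. In \cite{Wit_2009} the discrete deformation $ n\mapsto n+1 $ yields, for the coordinate attached to each moving singularity, a relation of the schematic shape $ (\text{ratio of singularity moduli})\,f^j_nf^j_{n+1}=[\,\cdot\,][\,\cdot\,]/[\,\cdot\,][\,\cdot\,] $, the bracketed factors being the spectral polynomial and companions shifted by the B\"acklund parameters; this is the multivariable lift of the $ D^{(1)}_4 $ relation $ tf_nf_{n+1}=[\cdots]/[\cdots] $ quoted before \eqref{JM1980}. Inserting \eqref{singularity}, \eqref{Sdata:1}--\eqref{Sdata:6} and the relations $ \zeta_1\zeta_2\zeta_3\zeta_4=1 $, $ \zeta_3=z_3^2\zeta_1=\zeta_2^{-1} $, one checks that the combinations of the momentum-type variables that occur are exactly the quartic $ z\mapsto z\,g^1_n+z^2g^2_n+z^3g^3_n+z^4 $ evaluated at the four singularities — these are the $ \mathcal{R}_1,\dots,\mathcal{R}_4 $ of \eqref{Rdefn:1}--\eqref{Rdefn:4}, since $ \tfrac{\Gamma\mp\Delta}{2v_1v_2(1\pm v_3)^2} $ are $ \zeta_4,\zeta_3,\zeta_1,\zeta_2 $ — while the prefactors collapse to the displayed ratios of $ (1\pm v_3)^2 $ and $ (\Gamma\pm\Delta) $, and the inner shifts of $ \mathcal{R}_i-n $ re-express through $ \Delta^2 $ and the auxiliary discriminants.

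Next I would treat the second Lax-pair recurrences \eqref{2ndLP:1}--\eqref{2ndLP:3}. Their counterpart in \cite{Wit_2009} is a relation of the type $ \omega^{j}_n+\omega^{j}_{n-1}+(\text{data})=(\text{rational in the }q_r) $, the multivariable analogue of the second square-lattice equation displayed above. Under the same substitution the right-hand combinations organise into the variables $ \mathcal{S}_1,\dots,\mathcal{S}_4 $ of \eqref{Sdefn:1}--\eqref{Sdefn:4}, which carry the additional $ \Theta'_n $ weighting at the singularities, while the constant and linear-in-$ n $ terms reproduce the combinations $ \Gamma[\,\cdots\,]/(2v_1v_2(1-v_3)^2) $ together with the spectral coefficients $ e_2 $ and $ m_2 $ of \eqref{Sdata:2} and \eqref{Sdata:6}. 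The initial values \eqref{initf1}--\eqref{initg3} I would establish separately, by inserting the elementary degree-$ 0 $ and degree-$ 1 $ bi-orthogonal polynomials and their associated functions on the unit circle — whose coefficients are explicit rational functions of $ w_{-2},\dots,w_2 $ — into $ f^j_0=\tfrac{\xi_j}{\xi_4}\tfrac{\Theta_0(\xi_j)}{\Theta_0(\xi_4)} $ and $ g^j_0=\omega^{j-1}_0 $.

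The hard part is not any isolated step but the bookkeeping inside the middle one: showing that the general discrete Garnier equations of \cite{Wit_2009}, written in the raw canonical variables $ q_r,p_r $, genuinely collapse into the displayed compact form once the $ \xi_4 $-normalisation \eqref{ReccVars} is imposed. Two things need care. First, tracking the $ \Theta_\infty $ factors and the $ q_r(q_r-1) $ denominators of \eqref{GarnierHam} through the change of variables, so that the net powers of $ (1\pm v_3) $, $ v_1v_2 $ and $ (\Gamma\pm\Delta) $ come out exactly as in \eqref{1stLP:1}--\eqref{2ndLP:3}. Second, checking that the stated hypotheses $ \Delta\neq0 $, $ \bar{\Delta}\neq0 $, $ \mathcal{S}_1,\mathcal{S}_2\neq0 $ are precisely what keeps the four singularities pairwise distinct — so that the semi-classical condition (iii) and the representation \eqref{ThetaRep} stay valid — and keeps every denominator in \eqref{1stLP:1}--\eqref{2ndLP:3} non-vanishing; here one reuses the sub-resultant computation and the coalescence table from the proof of Corollary \ref{GarnierID}, with $ \bar{\Delta}=0 $ accounting for the Curie, N\'eel and $ T=0 $ cases and $ \Delta=0 $ for the Disorder and $ T=\infty $ cases.
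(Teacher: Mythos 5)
Your proposal follows essentially the same route as the paper: both specialise the two general discrete Garnier Lax-pair recurrences of \cite{Wit_2009} (Proposition 4.3 there) to the spectral data \eqref{Sdata:1}--\eqref{Sdata:6} with the $\xi_4$-normalisation of \eqref{ReccVars}, identify the resulting combinations as the $\mathcal{R}_i$ and $\mathcal{S}_i$, and obtain the initial values by evaluating $\Theta_0$ and $\Omega_0$ on the low moments $w_{-2},\dots,w_2$ (the paper routes the $g^{j}_0$ through the auxiliary spectral polynomial $U(\zeta)$, which is equivalent to your direct computation from the degree-$0$ and degree-$1$ polynomials). The only substantive addition on your side is the explicit justification of the non-degeneracy hypotheses via the coalescence analysis of Corollary \ref{GarnierID}, which the paper leaves implicit.
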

\begin{proof}
We will first permute and re-label the singularities in the following way $ \xi_1=\zeta_4 $, $ \xi_2=\zeta_3 $, $ \xi_3=\zeta_1 $,
$ \xi_4=\zeta_2 $ so that we can rescale the contour radius in order to normalise with respect to $ \xi_4 $.
From an adaptation of the first result in Proposition 4.3 of \cite{Wit_2009} the first Lax pair can be written for $ j=1,2,3 $ as
\begin{multline}
 \frac{\xi_j}{\xi_4} f^{j}_{n} f^{j}_{n+1}
 = \frac{\left[ \xi_j g^{1}_{n}+\xi_j^2 g^{2}_{n}+\xi_j^3 g^{3}_{n}+\xi_j^4 \left(1+m_0\right)-n e_4 \right]}
        {\left[ \xi_4 g^{1}_{n}+\xi_4^2 g^{2}_{n}+\xi_4^3 g^{3}_{n}+\xi_4^4 \left(1+m_0\right)-n e_4 \right]}
\\ \times
   \frac{\left[ \xi_j \left(g^{1}_{n}+m_3\right)+\xi_j^2 \left(g^{2}_{n}-m_2\right)+\xi_j^3 \left(g^{3}_{n}+m_1\right)+\xi_j^4-n e_4 \right]}
        {\left[ \xi_4 \left(g^{1}_{n}+m_3\right)+\xi_4^2 \left(g^{2}_{n}-m_2\right)+\xi_4^3 \left(g^{3}_{n}+m_1\right)+\xi_4^4-n e_4 \right]} ,
\end{multline}
Substituting the evaluations of the singular points \eqref{singularity} and employing the definitions
\eqref{Rdefn:1}-\eqref{Rdefn:4} we simplify the resulting expressions and arrive at \eqref{1stLP:1}-\eqref{1stLP:3}. 

Proceeding we now define $ s_{n}[T=\{a,b,\ldots,\}] \equiv \sum_{i_n > \ldots > i_1 \in T} \xi_{i_1} \ldots \xi_{i_n} $ as the $n$-th
elementary symmetric function in the variables $ \xi_a, \xi_b, \ldots $ and the Vandermode determinant 
$ \Delta[T=\{a,b,\dots\}] \equiv \prod_{r<s \in \{a,b,\ldots\}}(\xi_s-\xi_r) $. Adapting now the second result in Proposition 4.3
of \cite{Wit_2009} the second member of the Lax pair is given for $ j=1,2,3 $ by
\begin{multline}
  g^{j}_{n}+g^{j}_{n-1}+(-1)^{j}(n-1)e_{4-j}-(-1)^{j}m_{4-j}
\\
   +(-1)^{j+1}(n+1+m_0)\frac{ s_{4-j}[123]\Delta[123] - s_{4-j}[234]\Delta[234] f^{1}_{n} + s_{4-j}[134]\Delta[134] f^{2}_{n} - s_{4-j}[124]\Delta[124] f^{3}_{n} }
         { \Delta[123] - \Delta[234] f^{1}_{n} + \Delta[134] f^{2}_{n} - \Delta[124] f^{3}_{n} }
\\
   +(-1)^{j}n e_4\frac{ s_{3-j}[123]\Delta[123] - s_{3-j}[234]\Delta[234] f^{1}_{n} + s_{3-j}[134]\Delta[134] f^{2}_{n} - s_{3-j}[124]\Delta[124] f^{3}_{n} }
         { s_{3}[123]\Delta[123] - s_{3}[234]\Delta[234] f^{1}_{n} + s_{3}[134]\Delta[134] f^{2}_{n} -  s_{3}[124]\Delta[124] f^{3}_{n} } = 0 .
\end{multline}
Using the definitions \eqref{Sdefn:1}-\eqref{Sdefn:4} and the evaluations of the other data we deduce \eqref{2ndLP:1}-\eqref{2ndLP:3}.

The first set of initial values, \eqref{initf1}-\eqref{initf3}, are ratios of the spectral coefficient (see Eqs. (2.6) and (2.10) of \cite{Wit_2009})
\begin{equation}
  \Theta_n(\zeta) = -ne_4\frac{\phi_n(0)}{\phi_{n+1}(0)}\zeta^{-1} + \frac{\kappa_n}{\kappa_{n+1}}\vartheta^{0}_{n} + \frac{\kappa_n}{\kappa_{n+1}}\vartheta^{1}_{n}\zeta
  + \frac{\kappa_n}{\kappa_{n+1}}(n+1+m_0)\zeta^2 ,
\end{equation}
appropriately specialised using the data \eqref{Sdata:1}-\eqref{Sdata:6} and evaluated at $ \zeta = \xi_j $ for $ j=1,\ldots, 4 $. 
The second set, \eqref{initg1}-\eqref{initg3}, can be derived from a third spectral polynomial $ U(\zeta) $ using the generic relation
\begin{equation}
   \Omega_0(\zeta)+V(\zeta) = \tfrac{1}{2}\left( 1+\frac{1}{r_1}\zeta \right)\left[ 2V(\zeta)-\kappa^2_0U(\zeta) \right] ,
\end{equation} 
and the fact that $ g^{l+1}_{n} = [\zeta^{l}](\Omega_0+V) $, which follows from the parameterisation of $ \Omega_n $. The polynomial
$ U $ has the explicit form (which is an adaptation of Eqs. (4.37)-(4.39) of \cite{Wit_2009} for $ e_4\neq 0 $) with coefficients
\begin{gather}
   u_0 = 2e_4w_1+m_3w_0, \quad u_3 = m_0w_0 ,
\\
   u_l = (-1)^{4-l}m_{3-l}w_0+2\sum^{\min(l-1,3)}_{r=-1} (-1)^{3-r}\left[ (l-r)e_{3-r}-m_{3-r} \right]w_{l-r} ,\quad l = 1, 2,
\end{gather}
in terms of a contiguous set of initial moments, essentially the same set that defines the initial values of the recurrence in
Corollary \ref{LinearRR}. Thus $ U $ encodes these initial values in an alternative way.
\end{proof}

Under the conditions applying to Corollary \ref{NonlinearRR}
the primary variables can be recovered from the $ \{f^{j}_n, g^{j}_n \} $ variables using the formulae given in
the following corollary and is a direct consequence of the theory in \cite{FW_2006c}, \cite{Wit_2009}.
\begin{corollary}[\cite{FW_2006c}, \cite{Wit_2009}]\label{Recover}
Assume the conditions of Corollary \ref{NonlinearRR}.
The Toeplitz determinants \eqref{ssDiag} and \eqref{Telement}, abbreviated as $ \langle \sigma_{0,0}\sigma_{N,N} \rangle^{\triangle} \eqqcolon I_{N} $, are computed
using the recurrence relation equation
\begin{equation}
  \frac{I_{n+1}I_{n-1}}{I^2_{n}} = 1-r_n\bar{r}_n , \quad n \geq 1 ,
\label{Irecur}
\end{equation}
with the initial conditions $ I_0=1 $, $ I_1=w_0 $. In turn the pair of reflection coefficients $ r_n, \bar{r}_n $ are computed firstly using
the recurrence relation for $ r_n $
\begin{equation}
       \frac{r_{n+1}}{r_n} = \frac{2n v_1 v_2(1-v_3)^2}{n+1} \frac{\mathcal{S}_1}{\mathcal{S}_2} , \quad n \geq 1.
\label{recover:1}
\end{equation} 
The coefficient $ \bar{r}_n $ is computed via another coefficient $ \lambda_{n} $ which satisfies
\begin{equation}
     (n+1)\lambda_{n+1}-n\lambda_{n} = -2n\frac{(1+v_3^2)\Gamma}{v_1 v_2(1-v_3^2)^2} + g^{3}_n + \frac{n+1}{2v_1 v_2(1-v_3^2)^2} \frac{\mathcal{S}_3}{\mathcal{S}_1} ,
\label{recover:2}
\end{equation} 
and the generic relation $ \lambda_{n+1}-\lambda_{n} = r_{n+1}\bar{r}_{n} $.
Alternatively one can use
\begin{equation}
   (n+1)\bar{\lambda}_{n+1}-n\bar{\lambda}_{n} = g^{1}_n - \frac{\Gamma}{2v_1 v_2(1-v_3)^2} \left[ 2n+1+(2n-1)\frac{(1-v_3)^2}{(1+v_3)^2} \right]
        + \frac{(n+1)}{2v_1v_2(1-v_3)^2}\frac{\mathcal{S}_2}{\mathcal{S}_1} ,
\label{recover:3}
\end{equation}
along with $ \bar{\lambda}_{n}-\bar{\lambda}_{n-1} = \bar{r}_{n}r_{n-1} $ to determine $ \bar{r}_n $.
The initial conditions are $ r_0 = \bar{r}_0 = 1 $ and $ \lambda_0 = \bar{\lambda}_0 = 0 $.
\end{corollary}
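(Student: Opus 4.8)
The plan is to obtain \eqref{Irecur}--\eqref{recover:3} by specialising the general recovery formulae for the $\tau$-functions and reflection coefficients of a Garnier system attached to bi-orthogonal polynomials on the unit circle with a regular semi-classical weight, developed in \cite{FW_2006c} and \cite{Wit_2009}, to the particular spectral data \eqref{Sdata:1}--\eqref{Sdata:6} and singularity positions \eqref{singularity} of the triangular-lattice weight \eqref{ssSymbol}. First I would record the classical determinantal input. Let $\phi_n(\zeta)$, $\bar{\phi}_n(\zeta)$ be the monic bi-orthogonal polynomials for \eqref{ssSymbol}, with orthonormalisation constants $\kappa_n$ satisfying $\kappa_n^{-2}=I_{n+1}/I_n$, and let $r_n,\bar{r}_n$ be the associated reflection coefficients normalised as in \cite{FW_2006c}. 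The Heine determinant formula gives $I_n=\det[w_{j-k}]_{j,k=0,\dots,n-1}$ with $I_0=1$, $I_1=w_0$, and the Szeg\H{o}-type norm relation reads $\kappa_{n-1}^2/\kappa_n^2=1-r_n\bar{r}_n$; since $\kappa_{n-1}^2/\kappa_n^2=I_{n-1}I_{n+1}/I_n^2$ these combine immediately into \eqref{Irecur}. Thus the entire problem is reduced to expressing $r_n$ and $\bar{r}_n$ through the spectral-structure variables $\{f^{j}_n,g^{j}_n\}$.

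Next I would import from \cite{Wit_2009} the isomonodromic parameterisation of the reflection coefficients. The forward ratio $r_{n+1}/r_n$ is, in the general theory, a ratio of the spectral coefficient $\Theta_n$ evaluated at two of the singularities (together with the leading datum $e_4$); under the relabelling $\xi_1=\zeta_4$, $\xi_2=\zeta_3$, $\xi_3=\zeta_1$, $\xi_4=\zeta_2$ used in the proof of Corollary \ref{NonlinearRR} this becomes a ratio of the auxiliary quantities $\mathcal{S}_1$ and $\mathcal{S}_2$ of \eqref{Sdefn:1}--\eqref{Sdefn:2}, and inserting $e_4=1$, $m_0=0$ together with the remaining entries of \eqref{Sdata:1}--\eqref{Sdata:6} yields exactly \eqref{recover:1}. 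The companion coefficient $\bar{r}_n$ is not itself a simple ratio, but is reconstructed through the summatory quantities $\lambda_n$, $\bar{\lambda}_n$ obeying the telescoping identities $\lambda_{n+1}-\lambda_n=r_{n+1}\bar{r}_n$ and $\bar{\lambda}_n-\bar{\lambda}_{n-1}=\bar{r}_n r_{n-1}$; the first-order recurrences \eqref{recover:2} and \eqref{recover:3} for $\lambda_n$ and $\bar{\lambda}_n$ are precisely the $j=3$ and $j=1$ components of the second Lax-pair relation already used to derive \eqref{2ndLP:1}--\eqref{2ndLP:3}, rearranged so as to isolate the non-telescoped remainder, and again specialised with \eqref{Sdata:1}--\eqref{Sdata:6} and \eqref{singularity}. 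Either of \eqref{recover:2} or \eqref{recover:3}, together with the corresponding telescoping identity and the already-known $r_n$, then determines $\bar{r}_n$.

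Finally the initial values $r_0=\bar{r}_0=1$ and $\lambda_0=\bar{\lambda}_0=0$ follow from $\phi_0=\bar{\phi}_0=1$ and the empty-sum convention for $\lambda_n,\bar{\lambda}_n$, and are consistent with the normalisation $I_0=1$, $I_1=w_0$. I expect the main obstacle to be purely computational: confirming that the ratios of quadratic, respectively quartic, polynomials in the $\xi_j$ that emerge from the general formulae of \cite{Wit_2009} collapse, after substitution of the explicit singularities \eqref{singularity} and spectral data \eqref{Sdata:1}--\eqref{Sdata:6}, to the compact closed forms on the right of \eqref{recover:1}--\eqref{recover:3} written in terms of $\Gamma$, $\Delta$ and the $\mathcal{S}_j$. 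This is the same resultant and symmetric-function bookkeeping already performed in the proof of Corollary \ref{NonlinearRR} and introduces no new ideas; the one point requiring care is that the denominators $\mathcal{S}_1,\mathcal{S}_2$ and the quantities $\Delta,\bar{\Delta}$ be nonvanishing, which is exactly the hypothesis carried over from Corollary \ref{NonlinearRR}.
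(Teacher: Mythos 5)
Your overall strategy --- obtain \eqref{Irecur} from the Szeg\H{o}-type norm identity and then specialise the isomonodromic parameterisation of \cite{Wit_2009} to the spectral data \eqref{Sdata:1}--\eqref{Sdata:6} and singularities \eqref{singularity} --- is the same as the paper's, and your treatment of \eqref{Irecur} and of \eqref{recover:1} (the latter as the symmetric-function/Vandermonde ratio \eqref{tmp:1}, which collapses to $\mathcal{S}_1/\mathcal{S}_2$ after substitution) matches what the paper does.

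The step that would fail as you describe it is your account of \eqref{recover:2} and \eqref{recover:3}. These are not rearrangements of the $j=3$ and $j=1$ components \eqref{2ndLP:3} and \eqref{2ndLP:1} of the second Lax pair: those equations contain only the combinations $g^{j}_{n}+g^{j}_{n-1}$ and ratios of the $\mathcal{S}$'s, and the quantities $\lambda_n$, $\bar{\lambda}_n$ simply do not occur in them, so no algebraic rearrangement or ``isolation of a non-telescoped remainder'' can extract a recurrence for $\lambda_n$ or $\bar{\lambda}_n$ from them. The missing ingredient is the pair of \emph{dual representations} of the spectral parameters $\vartheta^{1}_n$, $\omega^{0}_n$, $\omega^{2}_n$: on one side they are expressed through the leading and trailing coefficients of the bi-orthogonal polynomials, hence through $\lambda_{n}$, $\bar{\lambda}_{n}$ and the $r$'s (the paper's \eqref{Aux:2}--\eqref{Aux:4}, combined into \eqref{Aux:5}); on the other side they are expressed through the dynamical coordinates $f^{j}_n$ via \eqref{tmp:1}--\eqref{tmp:3}. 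Equating the two representations --- \eqref{Aux:5} with \eqref{tmp:3} for \eqref{recover:2}, and \eqref{Aux:3} with \eqref{tmp:1} for \eqref{recover:3}, using $g^{1}_n=\omega^{0}_n$ and $g^{3}_n=\omega^{2}_n$ from \eqref{ReccVars} --- is what produces the left-hand sides $(n+1)\lambda_{n+1}-n\lambda_{n}$ and $(n+1)\bar{\lambda}_{n+1}-n\bar{\lambda}_{n}$. You do gesture at ``the general recovery formulae of \cite{FW_2006c} and \cite{Wit_2009}'', which is where these identities live, but the specific mechanism you propose for deriving them is not viable; the remainder of your argument (the telescoping identities, the initial data and the closing of the system) agrees with the paper.
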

\begin{proof}
From the general theory the {\it spectral coefficients} can be parameterised, in the case of four finite singularities, in the
following forms (see Eqs. (2.6) and (2.10) of \cite{Wit_2009})
\begin{equation}
  \Theta_n = -ne_4\frac{\phi_n(0)}{\phi_{n+1}(0)}z^{-1} + \frac{\kappa_n}{\kappa_{n+1}}\vartheta^{0}_{n} + \frac{\kappa_n}{\kappa_{n+1}}\vartheta^{1}_{n}z
  + \frac{\kappa_n}{\kappa_{n+1}}(n+1+m_0)z^2 ,
\end{equation}
and (see Eqs. (2.7) and (2.11) of \cite{Wit_2009})
\begin{equation} 
  \Omega_n = -ne_4z^{-1} + \omega^{0}_n+\tfrac{1}{2}m_3 + (\omega^{1}_n-\tfrac{1}{2}m_2)z + (\omega^{2}_n+\tfrac{1}{2}m_1)z^2 + (1+\tfrac{1}{2}m_0)z^3 .
\end{equation}
The parameters introduced above can, in turn, be related to the leading or trailing coefficients (with respect to $ z $) of
the bi-orthogonal polynomials via the formulae
\begin{gather}
   \frac{r_{n+1}}{r_{n}}\vartheta^{0}_n = ne_3-m_3
	   +e_4\left[(n+1)\bar{\lambda}_{n+1}-(n-1)\left(\bar{\lambda}_{n-1}+\frac{r_{n-1}}{r_{n}}\right)\right] ,
\\
   \vartheta^{1}_n = -(n+1)e_1-m_1+(n+2+m_0)\left[\frac{r_{n+2}}{r_{n+1}}-\lambda_{n+2}\right]+(n+m_0)\lambda_n ,
\label{Aux:2}\\
   \omega^{0}_n = ne_3-m_3+e_4\left[ (n+1)\bar{\lambda}_{n+1}-n\left(\bar{\lambda}_n+\frac{r_n}{r_{n+1}}\right) \right] ,
\label{Aux:3}\\
   \omega^{2}_n = -e_1-m_1+(n+1+m_0)\lambda_{n+1}-(n+2+m_0)\left[ \lambda_{n+2}-\frac{r_{n+2}}{r_{n+1}} \right] .
\label{Aux:4}
\end{gather}
However these parameters are also given by the dynamical co-ordinates by the formulae
\begin{equation}
    \frac{r_n}{r_{n+1}} = \frac{n+1+m_0}{n} 
	\frac{ s_3[123]\Delta[123] - s_3[234]\Delta[234] f^{1}_n + s_3[134]\Delta[134] f^{2}_n - s_3[124]\Delta[124] f^{3}_n }
	     { \Delta[123] - \Delta[234] f^{1}_n + \Delta[134] f^{2}_n - \Delta[124] f^{3}_n } ,
\label{tmp:1}
\end{equation}
\begin{equation}
  \vartheta^{0}_n = (n+1+m_0)
	\frac{ s_2[123]\Delta[123] - s_2[234]\Delta[234] f^{1}_n + s_2[134]\Delta[134] f^{2}_n - s_2[124]\Delta[124] f^{3}_n }
	     { \Delta[123] - \Delta[234] f^{1}_n + \Delta[134] f^{2}_n - \Delta[124] f^{3}_n } ,
\label{tmp:2}
\end{equation}
and
\begin{equation}
  \vartheta^{1}_n = -(n+1+m_0)
	\frac{ s_1[123]\Delta[123] - s_1[234]\Delta[234] f^{1}_n + s_1[134]\Delta[134] f^{2}_n - s_1[124]\Delta[124] f^{3}_n }
	     { \Delta[123] - \Delta[234] f^{1}_n + \Delta[134] f^{2}_n - \Delta[124] f^{3}_n } .
\label{tmp:3}
\end{equation}
Combining \eqref{Aux:2} and \eqref{Aux:4} we note
\begin{equation}
   \omega^{2}_n - \vartheta^{1}_n = ne_1+(n+1+m_0)\lambda_{n+1}-(n+m_0)\lambda_{n} . 
\label{Aux:5}
\end{equation}
The first of these, \eqref{tmp:1}, gives the formula \eqref{recover:1}.
From the preceding relation \eqref{Aux:5} and \eqref{tmp:3} we deduce the formula \eqref{recover:2}. There are also other ways
to determine $ \bar{r}_{n} $, such as using \eqref{Aux:3} with \eqref{tmp:1}, and this yields \eqref{recover:3}. In addition
to the forgoing specific relations we have the general identities
\begin{equation}
    \lambda_{n+1}-\lambda_{n} = r_{n+1}\bar{r}_{n} ,\quad \bar{\lambda}_{n+1}-\bar{\lambda}_{n} = \bar{r}_{n+1}r_{n} ,
\end{equation}
which allow us to close the system.
Finally the Toeplitz determinants satisfy the second order difference equation
\begin{equation}
  \frac{I_{n+1}I_{n-1}}{I^2_{n}} = 1-r_n\bar{r}_n ,
\end{equation}
which is also a general identity.
\end{proof}

In the remaining part of our study we discuss two important special cases that arise from the general formulae above
by taking appropriate limits as given in \eqref{ColCorrSL} and \eqref{DiagCorrSL}.

\subsection{Correlations along a Row/Column of the Rectangular Ising Model}
The row and column correlations of the rectangular Ising model also have a Toeplitz determinant form
(see VIII.2.28,29,30 of \cite{MW_1973}). Because the row and column correlations are given by each other under the
exchange $ z_1 \leftrightarrow z_2 $ we will treat only the column correlations without any loss of generality.
These latter correlations are given by
\begin{gather} 
  \langle\sigma_{0,0}\sigma_{0,N}\rangle^{\square} = \det(w_{j-k})_{0\leq j,k\leq N-1}, 
\label{ssColumn} 
\end{gather} 
where the Fourier coefficients $ w_{n} $ 
\begin{equation}
   w_{n} = \int^{\pi}_{-\pi}\frac{d\theta}{2\pi}
           \frac{\left(1+\alpha_1\alpha_2-(\alpha_1+\alpha_2)\cos(\theta)\right)\cos(n\theta)-(\alpha_1-\alpha_2)\sin(\theta)\sin(n\theta)}
                {\sqrt{\left(1-2\alpha_1\cos(\theta)+\alpha_1^2\right)\left(1-2\alpha_2\cos(\theta)+\alpha_2^2\right)}} ,
\label{ToepCol}
\end{equation} 
are defined by the weight
\begin{equation} 
  w(\zeta) = \left[
  \frac{(1-\alpha_1\zeta)}{(1-\alpha_1\zeta^{-1})}
  \frac{(1-\alpha_2\zeta^{-1})}{(1-\alpha_2\zeta)}
                     \right]^{1/2}.
\end{equation}
Two new co-ordinates are defined by
\begin{equation}
   \alpha_1 = z_2\frac{1-z_1}{1+z_1},\quad \alpha_2 = \frac{1}{z_2}\frac{1-z_1}{1+z_1} .
\end{equation} 
The parameters $\alpha_1,\alpha_2$ are also related to the set $ S, C $ and $ \bar{S}, \bar{C} $ by
\begin{equation}
  \alpha_1 = \frac{\bar{C}-1}{\bar{S}}(C-S), \qquad
  \alpha_2 = \frac{\bar{S}}{\bar{C}-1}(C-S).
\end{equation} 
which is entirely analogous to the definitions of McCoy and Wu but differs because of conventions for the lattice co-ordinate system.
The modulus is related to these parameters by
\begin{equation}
  k = \frac{1-\alpha_1\alpha_2}{\alpha_2-\alpha_1}.
\end{equation}
For $ 0<k<\infty $ the parameters satisfy the inequalities
\begin{align}
  0 < \alpha_1 \leq \alpha_2 \leq 1 \leq \alpha_2^{-1} \leq \alpha_1^{-1}
  & \quad k>1 , \; T < T_C , \\
  0 < \alpha_1 \leq \alpha_2^{-1} \leq 1 \leq \alpha_2 \leq \alpha_1^{-1}
  & \quad k<1 ,\; T > T_C .  
\end{align}

\begin{lemma}
The Fourier coefficients satisfy the symmetry
\begin{equation}
   w_n(\alpha_1,\alpha_2) = w_{-n}(\alpha_2,\alpha_1)
\end{equation}
\end{lemma}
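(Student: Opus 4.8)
The plan is to prove the symmetry directly from the integral representation \eqref{ToepCol}, using only the parity of the trigonometric integrand. First I would write down $w_{-n}(\alpha_1,\alpha_2)$ by substituting $n\mapsto -n$ in \eqref{ToepCol}: because $\cos$ is even and $\sin$ is odd in the argument $n\theta$, this leaves the denominator and the $\cos(n\theta)$-term of the numerator unchanged and merely reverses the sign of the $(\alpha_1-\alpha_2)\sin\theta\sin(n\theta)$-term. Next I would perform the exchange $\alpha_1\leftrightarrow\alpha_2$: the radicand $(1-2\alpha_1\cos\theta+\alpha_1^2)(1-2\alpha_2\cos\theta+\alpha_2^2)$ and the combination $1+\alpha_1\alpha_2-(\alpha_1+\alpha_2)\cos\theta$ are both invariant, while $\alpha_1-\alpha_2$ picks up a sign. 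The two sign changes cancel, so the resulting integrand is identical to that of $w_n(\alpha_1,\alpha_2)$, which gives $w_{-n}(\alpha_2,\alpha_1)=w_n(\alpha_1,\alpha_2)$ as required.

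An equivalent and perhaps more transparent argument proceeds at the level of the symbol. The coefficients $w_n$ are the Laurent coefficients on $|\zeta|=1$ of
\[
  w(\zeta) = \left[\frac{(1-\alpha_1\zeta)(1-\alpha_2\zeta^{-1})}{(1-\alpha_1\zeta^{-1})(1-\alpha_2\zeta)}\right]^{1/2} .
\]
Carrying out the joint substitution $\zeta\mapsto\zeta^{-1}$, $\alpha_1\leftrightarrow\alpha_2$ sends the rational function inside the bracket to itself; since this transformation fixes $\zeta=1$, where $w(1)=1$, the principal branch is unambiguously continued and one obtains $w(\zeta^{-1};\alpha_2,\alpha_1)=w(\zeta;\alpha_1,\alpha_2)$ throughout the unit circle. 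Extracting coefficients, $w_n(\alpha_1,\alpha_2)=[\zeta^n]\,w(\zeta;\alpha_1,\alpha_2)=[\zeta^n]\,w(\zeta^{-1};\alpha_2,\alpha_1)=[\zeta^{-n}]\,w(\zeta;\alpha_2,\alpha_1)=w_{-n}(\alpha_2,\alpha_1)$.

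The one point that warrants attention — and the only candidate for an actual obstacle — is verifying that the square root is continued along the same branch under the transformation; this is disposed of by observing that $\zeta=1$ is a fixed point at which the weight takes the value $1$, so no spurious sign is introduced. Everything else reduces to elementary symmetry considerations, and either route above can be written out in a few lines.
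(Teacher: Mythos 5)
Your proof is correct. The paper states this lemma without giving any proof at all, so there is nothing to compare against; your direct verification from the parity of the integrand in \eqref{ToepCol} (with the sign flip from $\sin(-n\theta)$ cancelling against the sign flip of $\alpha_1-\alpha_2$ under the exchange) is exactly the elementary argument the author presumably had in mind, and your alternative route via the invariance of the symbol under $\zeta\mapsto\zeta^{-1}$, $\alpha_1\leftrightarrow\alpha_2$ — including the observation that the fixed point $\zeta=1$ with $w(1)=1$ pins down the branch of the square root — is equally sound.
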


As indicated by \eqref{ColCorrSL} the result \eqref{ssColumn} and \eqref{ToepCol} can be found from the diagonal correlations
on the triangular lattice by setting $ K_2\to 0 $ followed by
the relabelling $ K_3 \to K_2 $. This is still within the $ M=4 $ semi-classical class of the general case even though
the number of independent variables has been reduced by one. The singular points are now
\begin{equation}
   (\zeta_1, \zeta_2, \zeta_3, \zeta_4) = (\alpha_1^{-1},\alpha_2,\alpha_2^{-1},\alpha_1) .
\end{equation}
Even though this is merely a specialisation within the original system we will record the full details of the final 
recurrence relations for the convenience of the reader since they are new and of separate and intrinsic interest.
\begin{corollary}\label{LinearRR_ColCorrSqL}
The Toeplitz matrix elements $ w_{n} $ for the column correlations of the anisotropic square lattice Ising model satisfy
the fourth order, linear homogeneous difference equation in the index $ n \in  \Z $
\begin{multline}
      2\alpha_1\alpha_2(n-3)w_{n-3} - (1+\alpha_1 \alpha_2)\left[ (2n-5)\alpha_1+(2n-3)\alpha_2 \right] w_{n-2}
\\
    + 2\left[ (n-2)\alpha_1^2+n\alpha_2^2+(n-1)(1+\alpha_1\alpha_2)^2 \right]w_{n-1}
\\
    - (1+\alpha_1 \alpha_2)\left[ (2n-1)\alpha_1+(2n+1)\alpha_2 \right]w_{n} + 2\alpha_1\alpha_2(n+1) w_{n+1} = 0 .
\end{multline}
One could take any contiguous set of four elements including $ w_{0} $ as the initial values and iterate in either
direction.
\end{corollary}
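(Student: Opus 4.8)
The plan is to reproduce, for the column weight, the chain of steps used in the proof of Corollary~\ref{LinearRR}: identify the weight as a member of the $M=4$ regular semi-classical class, extract its spectral data $\{e_l,m_l\}$, and feed that data into the universal four-term recurrence of \cite{Wit_2009}. First I would rewrite the weight of \eqref{ToepCol} as
\[
  w(\zeta)=\left[\frac{(1-\alpha_1\zeta)(\zeta-\alpha_2)}{(\zeta-\alpha_1)(1-\alpha_2\zeta)}\right]^{1/2},
\]
clearing the negative powers of $\zeta$ and observing that the apparent pole at $\zeta=0$ cancels, so that $w'(\zeta)/w(\zeta)=2V(\zeta)/W(\zeta)$ with
\[
  W(\zeta)=(\zeta-\alpha_1^{-1})(\zeta-\alpha_2)(\zeta-\alpha_2^{-1})(\zeta-\alpha_1),
\]
the simple poles sitting at $(\zeta_1,\zeta_2,\zeta_3,\zeta_4)=(\alpha_1^{-1},\alpha_2,\alpha_2^{-1},\alpha_1)$ with residues $\rho_1=\rho_2=\tfrac12$, $\rho_3=\rho_4=-\tfrac12$, exactly as announced just above \eqref{ColCorrSL} and consistent with Corollary~\ref{GarnierID}. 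This exhibits $W,2V$ in the shape of \eqref{m=4_W},\eqref{m=4_2V} with conditions (i)--(iv) holding whenever $\alpha_1\ne\alpha_2$.

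Next I would read off the data. Expanding $W$ and the partial-fraction form of $2V$ and using $\zeta_1\zeta_2\zeta_3\zeta_4=1$ gives $e_0=e_4=1$, $m_0=0$, and
\[
  e_1=e_3=\frac{(\alpha_1+\alpha_2)(1+\alpha_1\alpha_2)}{\alpha_1\alpha_2},\quad e_2=2+\frac{(1+\alpha_1^2)(1+\alpha_2^2)}{\alpha_1\alpha_2},
\]
\[
  m_1=m_3=\frac{(\alpha_1-\alpha_2)(1+\alpha_1\alpha_2)}{2\alpha_1\alpha_2},\quad m_2=\frac{\alpha_1^2-\alpha_2^2}{\alpha_1\alpha_2},
\]
the precise analogues of \eqref{Sdata:1}--\eqref{Sdata:6}. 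Substituting this into the identity of \cite{Wit_2009} already invoked in the proof of Corollary~\ref{LinearRR},
\[
  \sum_{l=-1}^{3}(-1)^{3-l}\bigl[(p-l)e_{3-l}-m_{3-l}\bigr]w_{p-l}=0,\qquad p\in\Z,
\]
produces a five-term recurrence with rational coefficients; multiplying through by $2\alpha_1\alpha_2$ to clear the common denominator and using the elementary identities $(1+\alpha_1^2)(1+\alpha_2^2)-(1+\alpha_1\alpha_2)^2=(\alpha_1-\alpha_2)^2$ and $2\alpha_1\alpha_2+(1+\alpha_1^2)(1+\alpha_2^2)=\alpha_1^2+\alpha_2^2+(1+\alpha_1\alpha_2)^2$ collapses the expression to the displayed identity. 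The closing remark is immediate, since the extreme coefficients $2\alpha_1\alpha_2(n-3)$ and $2\alpha_1\alpha_2(n+1)$ vanish only at $n=3$ and $n=-1$, where the recurrence drops to third order as in Corollary~\ref{LinearRR}.

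I expect no conceptual obstacle; the only delicate point is the bookkeeping that turns the specialised rational data into the compact coefficients quoted (together with verifying that $\zeta=0$ genuinely drops out, which is what keeps us in the $M=4$ rather than the $M=5$ class). As an independent consistency check one may instead derive the recurrence as the degeneration of \eqref{LRR_triangle} under $v_2\to1$ followed by the relabelling $v_3\mapsto v_2$, using $\alpha_1\alpha_2=v_1^2$, $\Gamma\to(1+v_1^2)(1-v_2^2)$ and $\Delta\to\pm(1-v_1^2)(1-v_2^2)$ in that limit; the two routes must agree up to an overall scalar, which also fixes the sign of $\Delta$.
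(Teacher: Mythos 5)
Your proposal is correct and follows essentially the same route as the paper, which presents this corollary as a direct specialisation of the machinery behind Corollary \ref{LinearRR}: read off the singularities $(\alpha_1^{-1},\alpha_2,\alpha_2^{-1},\alpha_1)$ with residues $(\tfrac12,\tfrac12,-\tfrac12,-\tfrac12)$, compute the spectral data $\{e_l,m_l\}$, and insert it into the universal five-term identity $\sum_{l=-1}^{3}(-1)^{3-l}[(p-l)e_{3-l}-m_{3-l}]w_{p-l}=0$ of \cite{Wit_2009}; your values of $e_1=e_3$, $e_2$, $m_1=m_3$, $m_2$ and the resulting coefficients all check out against the displayed recurrence. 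The only cosmetic caveat is that pairwise distinctness of the singularities requires $\alpha_1\ne\alpha_2^{\pm1}$ and $\alpha_i\ne\pm1$, not merely $\alpha_1\ne\alpha_2$, though the final identity extends to all parameter values by continuity.
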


And the corresponding specialisation of the coupled non-linear recurrences for the Toeplitz determinants is given by the
following result.
\begin{corollary}\label{NonlinearRR_ColCorrSqL}
Assume that
$ (1-\alpha_1^2)\left[ 1-\alpha_2^2 f^{2}_{n} \right] - (1-\alpha_2^2)\left[ f^{1}_{n}-\alpha_1^2f^{3}_{n} \right] \neq 0 $ and
$ \alpha_1 (1-\alpha_1^2)\left[ 1-\alpha_2^4f^{2}_{n} \right] - \alpha_2(1-\alpha_2^2)\left[ f^{1}_{n}-\alpha_1^4f^{3}_{n} \right] \neq 0 $.
The set of variables $ \{ f^{1}_{n},f^{2}_{n},f^{3}_{n},g^{1}_{n},g^{2}_{n},g^{3}_{n} \}^{\infty}_{n=0} $ for the column
correlations of the anisotropic square lattice Ising model satisfy a system of coupled, first order non-linear difference
equations in $ n \geq 0 $. The first set of three constitute the first members of the Lax pair
\begin{multline}
   \frac{\alpha_1}{\alpha_2}f^{1}_{n}f^{1}_{n+1} =
\\
   \frac{ \left[ \alpha_1 g^{1}_{n}+\alpha_1^2 g^{2}_{n}+\alpha_1^3 g^{3}_{n}+\alpha_1^4-n \right] }
        { \left[ \alpha_2 g^{1}_{n}+\alpha_2^2 g^{2}_{n}+\alpha_2^3 g^{3}_{n}+\alpha_2^4-n \right] }
   \frac{ \left[ \alpha_1 g^{1}_{n}+\alpha_1^2 g^{2}_{n}+\alpha_1^3 g^{3}_{n}-n-\frac{1}{2}+\frac{3}{2}\alpha_1^4+\frac{\alpha_1}{2\alpha_2}(1-\alpha_1^2)(1+\alpha_2^2) \right] }
        { \left[ \alpha_2 g^{1}_{n}+\alpha_2^2 g^{2}_{n}+\alpha_2^3 g^{3}_{n}-n+\frac{1}{2}+\frac{1}{2}\alpha_2^4-\frac{\alpha_2}{2\alpha_1}(1+\alpha_1^2)(1-\alpha_2^2) \right] } ,
\label{1stLP_ColCorrSL:1}
\end{multline}
\begin{multline}
   \alpha_2^6 f^{2}_{n}f^{2}_{n+1} =
\\
   \frac{ \left[ \alpha_2^3 g^{1}_{n}+\alpha_2^2 g^{2}_{n}+\alpha_2 g^{3}_{n}-n\alpha_2^4+1 \right] }
        { \left[ \alpha_2 g^{1}_{n}+\alpha_2^2 g^{2}_{n}+\alpha_2^3 g^{3}_{n}+\alpha_2^4-n \right] }
   \frac{ \left[ \alpha_2^3 g^{1}_{n}+\alpha_2^2 g^{2}_{n}+\alpha_2 g^{3}_{n}-(n+\frac{1}{2})\alpha_2^4+\frac{3}{2}-\frac{\alpha_2}{2\alpha_1}(1+\alpha_1^2)(1-\alpha_2^2) \right] }
        { \left[ \alpha_2 g^{1}_{n}+\alpha_2^2 g^{2}_{n}+\alpha_2^3 g^{3}_{n}-n+\frac{1}{2}+\frac{1}{2}\alpha_2^4-\frac{\alpha_2}{2\alpha_1}(1+\alpha_1^2)(1-\alpha_2^2) \right] } ,
\label{1stLP_ColCorrSL:2}
\end{multline}
\begin{multline}
   \frac{\alpha_1^7}{\alpha_2} f^{3}_{n}f^{3}_{n+1} = 
\\
   \frac{ \left[ \alpha_1^3 g^{1}_{n}+\alpha_1^2 g^{2}_{n}+\alpha_1 g^{3}_{n}-n\alpha_1^4+1 \right] }
        { \left[ \alpha_2 g^{1}_{n}+\alpha_2^2 g^{2}_{n}+\alpha_2^3 g^{3}_{n}+\alpha_2^4-n \right] }
   \frac{ \left[ \alpha_1^3 g^{1}_{n}+\alpha_1^2 g^{2}_{n}+\alpha_1 g^{3}_{n}-(n-\frac{1}{2})\alpha_1^4+\frac{1}{2}+\frac{\alpha_1}{2\alpha_2}(1-\alpha_1^2)(1+\alpha_2^2)  \right] }
        { \left[ \alpha_2 g^{1}_{n}+\alpha_2^2 g^{2}_{n}+\alpha_2^3 g^{3}_{n}-n+\frac{1}{2}+\frac{1}{2}\alpha_2^4-\frac{\alpha_2}{2\alpha_1}(1+\alpha_1^2)(1-\alpha_2^2) \right] } .
\label{1stLP_ColCorrSL:3}
\end{multline}
The remaining set of coupled, first order non-linear difference equations constitute the three members of the second Lax pair
and are given by
\begin{multline}
  g^{1}_{n}+ g^{1}_{n-1}-\frac{(1+\alpha_1\alpha_2)}{2\alpha_1\alpha_2}\left[ (2n-3)\alpha_1+(2n-1)\alpha_2 \right]
\\
  +\frac{1}{\alpha_1\alpha_2}(n+1)
   \frac{\left[ \alpha_1 (1-\alpha_1^2)\left[ 1-\alpha_2^4f^{2}_{n} \right] - \alpha_2 (1-\alpha_2^2)\left[ f^{1}_{n}-\alpha_1^4f^{3}_{n} \right] \right]}
        {\left[ (1-\alpha_1^2)\left[ 1-\alpha_2^2 f^{2}_{n} \right] - (1-\alpha_2^2)\left[ f^{1}_{n}-\alpha_1^2f^{3}_{n} \right] \right]}
\\ 
   -n
   \frac{\left[ \splitdfrac{ (1-\alpha_1^2)\left[ (1+\alpha_1 (\alpha_1+\alpha_2))-(\alpha_1+\alpha_2(1+\alpha_1^2))\alpha_2^3f^{2}_{n} \right] }
                    { - (1-\alpha_2^2)\left[ (1+\alpha_2 (\alpha_1+\alpha_2))f^{1}_{n}-(\alpha_2+\alpha_1(1+\alpha_2^2))\alpha_1^3f^{3}_{n} \right]} \right]}
        {\left[ \alpha_1 (1-\alpha_1^2)\left[ 1-\alpha_2^4f^{2}_{n} \right] - \alpha_2(1-\alpha_2^2)\left[ f^{1}_{n}-\alpha_1^4f^{3}_{n} \right] \right]} = 0 ,
\label{2ndLP_ColCorrSL:1}
\end{multline}
\begin{multline}
   g^{2}_{n}+g^{2}_{n-1}+\frac{\alpha_2^2-\alpha_1^2}{\alpha_1\alpha_2}+(n-1)\frac{1}{\alpha_1\alpha_2}\left[ \alpha_1^2+\alpha_2^2+(1+\alpha_1\alpha_2)^2 \right]
\\
   +n
   \frac{\left[ \splitdfrac{ (1-\alpha_1^2)\left[ (\alpha_1+\alpha_2(1+\alpha_1^2))-(1+\alpha_1(\alpha_1+\alpha_2))\alpha_2^3f^{2}_{n} \right]}
                    {- (1-\alpha_2^2)\left[ (\alpha_2+\alpha_1(1+\alpha_2^2))f^{1}_{n}-(1+\alpha_2(\alpha_1+\alpha_2))\alpha_1^3f^{3}_{n} \right]} \right]}
        {\left[ \alpha_1(1-\alpha_1^2)\left[ 1-\alpha_2^4f^{2}_{n} \right] - \alpha_2 (1-\alpha_2^2)\left[ f^{1}_{n}-\alpha_1^4f^{3}_{n} \right] \right]}
\\
   -\frac{n+1}{\alpha_1 \alpha_2}
   \frac{\left[ \splitdfrac{ (1-\alpha_1^2)\left[ (1+\alpha_1(\alpha_1+\alpha_2))-(\alpha_1+\alpha_2(1+\alpha_1^2))\alpha_2^3f^{2}_{n} \right]}
                    { - (1-\alpha_2^2)\left[ (1+\alpha_2 (\alpha_1+\alpha_2))f^{1}_{n}-(\alpha_2+\alpha_1(1+\alpha_2^2))\alpha_1^3f^{3}_{n} \right]} \right]}
        {\left[ (1-\alpha_1^2)\left[ 1-\alpha_2^2 f^{2}_{n} \right] - (1-\alpha_2^2)\left[ f^{1}_{n}-\alpha_1^2f^{3}_{n} \right] \right]} = 0 ,
\label{2ndLP_ColCorrSL:2}
\end{multline}
\begin{multline}
   g^{3}_{n}+g^{3}_{n-1}-\frac{(1+\alpha_1\alpha_2)}{2\alpha_1\alpha_2}\left[ (2n-3)\alpha_1+(2n-1)\alpha_2 \right]
\\
   -\alpha_1\alpha_2 n
   \frac{\left[ (1-\alpha_1^2)\left[ 1-\alpha_2^2f^{2}_{n} \right] - (1-\alpha_2^2)\left[ f^{1}_{n}-\alpha_1^2f^{3}_{n} \right] \right]}
        {\left[ \alpha_1 (1-\alpha_1^2)\left[ 1-\alpha_2^4f^{2}_{n} \right] - \alpha_2(1-\alpha_2^2)\left[ f^{1}_{n}-\alpha_1^4f^{3}_{n} \right] \right]}
\\
   +\frac{n+1}{\alpha_1\alpha_2}
   \frac{\left[ \splitdfrac{ (1-\alpha_1^2)\left[ (\alpha_1+\alpha_2(1+\alpha_1^2))-(1+\alpha_1(\alpha_1+\alpha_2))\alpha_2^3f^{2}_{n} \right]}
                    {-(1-\alpha_2^2)\left[ (\alpha_2+\alpha_1(1+\alpha_2^2))f^{1}_{n}-(1+\alpha_2(\alpha_1+\alpha_2))\alpha_1^3f^{3}_{n} \right]} \right]}
        {\left[ (1-\alpha_1^2)\left[ 1-\alpha_2^2f^{2}_{n} \right]-(1-\alpha_2^2)\left[ f^{1}_{n}-\alpha_1^2f^{3}_{n} \right] \right]} = 0 .
\label{2ndLP_ColCorrSL:3}
\end{multline}
These recurrences are subject to the initial values for $ j=1,2,3 $ 
\begin{equation}
   f^{1}_{0} =  \frac {\alpha_1}{\alpha_2}
                \frac{ 4\alpha_1^2\alpha_2 w_{-2} - \alpha_1(\alpha_2(1+\alpha_1^2)+\alpha_1(3+\alpha_2^2)) w_{-1} - (\alpha_2(1-\alpha_1^2)-\alpha_1(1-\alpha_2^2)) w_0 + 2\alpha_1\alpha_2 w_1}
                     { 4\alpha_1\alpha_2^2 w_{-2} - \alpha_2(\alpha_2(1+3\alpha_1^2)+\alpha_1(3-\alpha_2^2)) w_{-1} + (\alpha_1(1-\alpha_2^2)-\alpha_2(1-\alpha_1^2)) w_0 + 2\alpha_1\alpha_2 w_1} ,
\end{equation} 
\begin{equation}
   f^{2}_{0} =  \frac{1}{\alpha_2^3}
                \frac{ 4\alpha_1\alpha_2 w_{-2} - (\alpha_1+\alpha_2+\alpha_1\alpha_2(3\alpha_1+\alpha_2)) w_{-1} + \alpha_2(\alpha_1-\alpha_2)(1+\alpha_1\alpha_2) w_0 + 2\alpha_1\alpha_2^2 w_1}
                     { 4\alpha_1\alpha_2^2 w_{-2} - \alpha_2(3\alpha_1+\alpha_2+\alpha_1\alpha_2(3\alpha_1-\alpha_2)) w_{-1} + (\alpha_1-\alpha_2) (1+\alpha_1\alpha_2) w_0 + 2\alpha_1\alpha_2 w_1} ,
\end{equation}
\begin{equation}
   f^{3}_{0} = \frac{1}{\alpha_1^2\alpha_2}
               \frac{ 4\alpha_2\alpha_1 w_{-2} - (3\alpha_1-\alpha_2+\alpha_1\alpha_2(3\alpha_1+\alpha_2)) w_{-1} + \alpha_1(\alpha_1-\alpha_2)(1+\alpha_1\alpha_2) w_0 + 2\alpha_1^2\alpha_2 w_1}
                    { 4\alpha_1\alpha_2^2 w_{-2} - \alpha_2(3\alpha_1+\alpha_2+\alpha_1\alpha_2(3\alpha_1-\alpha_2))w_{-1} + (\alpha_1-\alpha_2)(1+\alpha_1\alpha_2) w_0 + 2\alpha_1\alpha_2 w_1} ,
\end{equation} 
and
\begin{equation}
   g^{1}_{0} = -\frac{(\alpha_1(1-\alpha_2^2)-\alpha_2(1-\alpha_1^2)) w_0 + 2\alpha_1\alpha_2 w_1}
                     {2\alpha_1\alpha_2 w_0} ,
\end{equation}
\begin{equation}
   g^{2}_{0} = \frac{\left[ \splitdfrac{ 2(\alpha_1^2-\alpha_2^2) w_{-1}w_0 + (\alpha_1+3\alpha_2)(1+\alpha_1\alpha_2) w_{-1}w_1 }
                                { - 4\alpha_1\alpha_2 w_{-1}w_2 + (\alpha_1-\alpha_2)(1+\alpha_1\alpha_2) w_0^2 + 2\alpha_1\alpha_2 w_0w_1 } \right]}
                    {2\alpha_1\alpha_2 w_{-1}w_0} ,
\end{equation}
\begin{equation}
   g^{3}_{0} =  \frac{ 4\alpha_1\alpha_2 w_{-2} w_0 - 2\alpha_1\alpha_2 w_{-1}^2 - (\alpha_1(3+\alpha_2^2)+\alpha_2(1+3\alpha_1^2)) w_0 w_{-1}}
                     { 2\alpha_1\alpha_2 w_{-1} w_0} .
\end{equation}
\end{corollary}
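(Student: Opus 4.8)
The plan is to reuse the apparatus behind Corollary~\ref{NonlinearRR}, now fed with the spectral data of the column weight \eqref{ToepCol}. One could instead specialise Corollary~\ref{NonlinearRR} via $ K_2\to 0 $ and then $ K_3\to K_2 $ as in \eqref{ColCorrSL}, but that limit sends $ v_2\to 1 $, which is a zero of many of the denominators in \eqref{1stLP:1}--\eqref{2ndLP:3} and \eqref{initf1}--\eqref{initg3}, so I would work directly from the weight. First I would compute $ w'/w $ from \eqref{ToepCol}: the apparent pole at $ \zeta=0 $ cancels, leaving the semi-classical form \eqref{SCweight} with $ W(\zeta)=(\zeta-\alpha_1)(\zeta-\alpha_1^{-1})(\zeta-\alpha_2)(\zeta-\alpha_2^{-1}) $, so the four finite non-zero singularities are $ (\zeta_1,\zeta_2,\zeta_3,\zeta_4)=(\alpha_1^{-1},\alpha_2,\alpha_2^{-1},\alpha_1) $, the residues are $ \rho_j=2V(\zeta_j)/W'(\zeta_j)=\pm\tfrac12 $ (hence $ m_0=\sum_j\rho_j=0 $), $ e_4=1 $, and the remaining $ e_l $ together with the coefficients $ m_l $ of $ 2V $ come out as elementary symmetric and related functions of $ \alpha_1^{\pm1},\alpha_2^{\pm1} $. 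Substituting this data into the master linear relation $ \sum_{l=-1}^{3}(-1)^{3-l}[(p-l)e_{3-l}-m_{3-l}]w_{p-l}=0 $ of \cite{Wit_2009} simultaneously gives Corollary~\ref{LinearRR_ColCorrSqL}.

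Next I would check admissibility: for $ 0<k<\infty $, i.e. $ \alpha_1\neq\alpha_2 $ and $ \alpha_1\alpha_2\neq1 $, the four singularities are pairwise distinct and the residues lie in $ \tfrac12+\Z $, hence not in $ \Z_{\ge0} $, so conditions (i)--(iv) of the regular semi-classical class hold; by \cite{FW_2006c},\cite{Wit_2009} (exactly as in Corollary~\ref{GarnierID}) the $ \langle\sigma_{0,0}\sigma_{0,N}\rangle^{\square} $ then form a classical $ \tau $-sequence for the associated $ M=4 $ Garnier system, and the dynamical co-ordinates $ \{f^{j}_n,g^{j}_n\} $ of \eqref{ReccVars} are well defined. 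The two non-vanishing hypotheses in the statement are precisely the requirement that the two Vandermonde-type combinations appearing below as denominators --- the specialisations of $ \Delta[123]-\Delta[234]f^1_n+\Delta[134]f^2_n-\Delta[124]f^3_n $ and of its $ s_3[\cdot]\Delta[\cdot] $-weighted analogue --- do not vanish.

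Then I would substitute. Relabelling $ (\xi_1,\xi_2,\xi_3,\xi_4)=(\zeta_4,\zeta_3,\zeta_1,\zeta_2)=(\alpha_1,\alpha_2^{-1},\alpha_1^{-1},\alpha_2) $ as in the proof of Corollary~\ref{NonlinearRR}, so that $ \xi_4=\alpha_2 $ is the normalising singularity, I would insert these and the Step~1 data into the first member of the Lax pair of Proposition~4.3 of \cite{Wit_2009} (the one displayed in that proof), clear the $ \xi_j^{-1} $ denominators by multiplying by $ \alpha_1^4 $ or $ \alpha_2^4 $, and simplify to reach \eqref{1stLP_ColCorrSL:1}--\eqref{1stLP_ColCorrSL:3}; the second member of that Lax pair, with the coefficients $ s_{4-j}[\cdot]\Delta[\cdot] $ and $ s_{3-j}[\cdot]\Delta[\cdot] $ on the same triples, would give \eqref{2ndLP_ColCorrSL:1}--\eqref{2ndLP_ColCorrSL:3}; and evaluating the spectral coefficient $ \Theta_n $ and the auxiliary polynomial $ U(\zeta) $ at $ n=0 $ (using $ \phi_0(0)=\kappa_0=1 $) in terms of the contiguous moments $ w_{-2},w_{-1},w_0,w_1 $ would produce the initial values $ f^{j}_0,g^{j}_0 $.

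The hard part is not conceptual --- everything is inherited from Corollary~\ref{NonlinearRR} --- but bookkeeping: one must pin down the labelling of the $ \xi_j $ so that the global signs and the recurring factors $ (1-\alpha_i^2) $ and $ (1+\alpha_1\alpha_2) $ emerge exactly as written, and must use the pairings $ \alpha_1\cdot\alpha_1^{-1}=\alpha_2\cdot\alpha_2^{-1}=1 $ together with $ \prod_j\zeta_j=1 $ to force the cancellations when collapsing products of four linear factors in $ \xi_j $ to the compact rational right-hand sides. A secondary point to watch: unlike the generic triangular case, $ w_{-1} $ and $ w_{-2} $ are genuinely independent data here --- the Lemma only relates $ w_n(\alpha_1,\alpha_2) $ to $ w_{-n}(\alpha_2,\alpha_1) $ --- so the initial-value formulae must stay expressed through the full four-term moment window.
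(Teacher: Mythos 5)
Your proposal matches the paper's (implicit) derivation: the corollary is presented there as a direct specialisation of the general $M=4$ machinery of \cite{Wit_2009} to the singularity configuration $(\alpha_1^{-1},\alpha_2,\alpha_2^{-1},\alpha_1)$ with residues $\pm\tfrac12$ and $m_0=0$, which is exactly the route you take, including the relabelling that makes $\xi_4=\alpha_2$ the normalising singularity and the clearing of $\xi_j^{-4}$ factors that produces the prefactors $\alpha_2^6$ and $\alpha_1^7/\alpha_2$. One correction to your motivating remark: the specialisation of Corollary \ref{NonlinearRR} is not singular --- setting $v_2\to1$ does not annihilate the denominators there (for instance $\Delta^2\to(1-v_1^2)^2(1-v_3^2)^2\neq0$ and $\Gamma\to(1+v_1^2)(1-v_3^2)\neq0$), and the paper explicitly notes that this case remains within the nondegenerate $M=4$ class; it is the diagonal square-lattice limit $v_3\to1$ that genuinely degenerates and requires the scaling analysis of Proposition \ref{SLdiagonalCorr}.
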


The primary variables can be recovered from the the $ \{f^{j}_n, g^{j}_n \} $ variables using the formulae given in
the following corollary and is a direct consequence of the theory in \cite{FW_2006c}, \cite{Wit_2009}.
\begin{corollary}\label{Recover_ColCorrSqL}
The Toeplitz determinants \eqref{ssColumn} and \eqref{ToepCol}, abbreviated as $ \langle \sigma_{0,0}\sigma_{0,N} \rangle^{\square} \eqqcolon I_{N} $,
are computed using the generic recurrence relations of Cor. \ref{Recover}
with the initial conditions $ I_0=1 $, $ I_1=w_0 $. In turn the pair of reflection coefficients $ r_n, \bar{r}_n $ are computed firstly using
the recurrence relation
\begin{equation}
       \frac{r_{n+1}}{r_n} = \frac{\alpha_1\alpha_2 n}{n+1} 
       \frac{(1-\alpha_1^2)\left[ 1-\alpha_2^2 f^{2}_{n} \right] - (1-\alpha_2^2)\left[ f^{1}_{n}-\alpha_1^2f^{3}_{n} \right] }
            {\alpha_1(1-\alpha_1^2)\left[ 1-\alpha_2^4 f^{2}_{n} \right] - \alpha_2(1-\alpha_2^2)\left[ f^{1}_{n}-\alpha_1^4 f^{3}_{n} \right] }, \quad n \geq 1.
\end{equation} 
Secondly $ \bar{r}_n $ is computed through another coefficient $ \lambda_{n} $ which satisfies
\begin{multline}
   (n+1)\lambda_{n+1}-n\lambda_{n} = g^{3}_n - \frac{(\alpha_1+\alpha_2) (1+\alpha_1\alpha_2)}{\alpha_1 \alpha_2} n 
\\  
   + \frac{n+1}{\alpha_1 \alpha_2}
     \frac{\left[ \splitdfrac{ (1-\alpha_1^2)\left[ (\alpha_1+\alpha_2(1+\alpha_1^2))-(1+\alpha_1(\alpha_1+\alpha_2))\alpha_2^3f^{2}_{n} \right]}
                      {- (1-\alpha_2^2)\left[ (\alpha_2+\alpha_1(1+\alpha_2^2)) f^{1}_{n}-(1+\alpha_2(\alpha_1+\alpha_2))\alpha_1^3f^{3}_{n} \right]} \right]}
          {(1-\alpha_1^2)\left[ 1-\alpha_2^2 f^{2}_{n} \right] - (1-\alpha_2^2)\left[ f^{1}_{n}-\alpha_1^2 f^{3}_{n} \right] } ,
\label{}
\end{multline} 
and the generic relation $ \lambda_{n+1}-\lambda_{n} = r_{n+1}\bar{r}_{n} $.
Alternatively one can use
\begin{multline}
   (n+1)\bar{\lambda}_{n+1}-n\bar{\lambda}_{n} = g^{1}_n -\frac{(1+\alpha_1\alpha_2)}{2\alpha_1\alpha_2}\left[ (2n-1)\alpha_1+(2n+1)\alpha_2 \right]
\\
   + \frac{n+1}{\alpha_1 \alpha_2}
     \frac{\alpha_1(1-\alpha_1^2)\left[ 1-\alpha_2^4f^{2}_{n} \right] - \alpha_2 (1-\alpha_2^2)\left[ f^{1}_{n}-\alpha_1^4f^{3}_{n} \right] }
          {(1-\alpha_1^2)\left[ 1-\alpha_2^2 f^{2}_{n} \right] - (1-\alpha_2^2)\left[ f^{1}_{n}-\alpha_1^2 f^{3}_{n} \right] } ,
\label{}
\end{multline}
along with the previous generic relations.
\end{corollary}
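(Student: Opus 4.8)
The plan is to treat this as the $ K_2\to 0 $ (followed by the relabelling $ K_3\mapsto K_2 $) reduction of Corollary \ref{Recover}, and simply re-run that corollary's argument with the specialised singularity data $ (\zeta_1,\zeta_2,\zeta_3,\zeta_4)=(\alpha_1^{-1},\alpha_2,\alpha_2^{-1},\alpha_1) $ (and, as in the proof of Corollary \ref{NonlinearRR_ColCorrSqL}, after the relabelling of the $ \zeta_j $ that lets one normalise the contour radius against a single singularity). The first task is to assemble the spectral data. Reading off $ 2V(\zeta)/W(\zeta)=w'(\zeta)/w(\zeta) $ for the column weight gives $ W(\zeta)=(\zeta-\alpha_1^{-1})(\zeta-\alpha_2)(\zeta-\alpha_2^{-1})(\zeta-\alpha_1) $, whose elementary symmetric functions satisfy in particular $ e_4=\zeta_1\zeta_2\zeta_3\zeta_4=1 $, while the residues are $ \rho_j=\pm\tfrac12 $ so that $ m_0=\sum_j\rho_j=0 $ and $ m_1,m_2,m_3 $ are then explicit rational functions of $ \alpha_1,\alpha_2 $; one also needs the Vandermonde products $ \Delta[\,\cdot\,] $ and the partial symmetric functions $ s_k[\,\cdot\,] $ entering \eqref{tmp:1}--\eqref{tmp:3}, evaluated on each of the four triples of singularities.

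The second, essentially trivial, step is that the generic apparatus used in the proof of Corollary \ref{Recover} is parameter-free and transfers unchanged: the parameterisations of the spectral coefficients $ \Theta_n $ and $ \Omega_n $, the relations \eqref{Aux:2}--\eqref{Aux:5} between $ \vartheta^{0}_n,\vartheta^{1}_n,\omega^{0}_n,\omega^{2}_n $ and $ r_n,\bar r_n,\lambda_n,\bar\lambda_n $, the closing identities $ \lambda_{n+1}-\lambda_n=r_{n+1}\bar r_n $ and $ \bar\lambda_{n+1}-\bar\lambda_n=\bar r_{n+1}r_n $, and the Toeplitz identity $ I_{n+1}I_{n-1}/I^2_n=1-r_n\bar r_n $. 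Substituting the Step-one data into \eqref{tmp:1} produces the stated recurrence for $ r_{n+1}/r_n $; combining \eqref{Aux:5} with \eqref{tmp:3} gives the $ \lambda $-recurrence; combining \eqref{Aux:3} with \eqref{tmp:1} gives the alternative $ \bar\lambda $-recurrence. The initial data $ I_0=1 $, $ I_1=w_0 $, $ r_0=\bar r_0=1 $, $ \lambda_0=\bar\lambda_0=0 $ come from the $ n=0 $ evaluations together with the normalisation $ \kappa_0=1 $ of the bi-orthogonal system, exactly as in Corollary \ref{Recover}; the initial moments $ w_{-2},w_{-1},w_0,w_1,\dots $ entering the $ f^{j}_0,g^{j}_0 $ of Corollary \ref{NonlinearRR_ColCorrSqL} are the same contiguous set as for Corollary \ref{LinearRR_ColCorrSqL}.

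What then remains is the algebraic consolidation: expanding the rational combinations of $ \{f^{j}_n,g^{j}_n\} $ coming out of Step two with the above explicit $ \zeta_j $, and checking that they collapse to the compact forms displayed in the statement. This is heavy but purely mechanical; it is eased by keeping $ \alpha_1,\alpha_2 $ symbolic and exploiting $ \zeta_1\zeta_4=1 $ and $ \zeta_2\zeta_3=1 $. The one genuine point --- and the thing I would check first --- is that the $ K_2\to 0 $ limit really does remain inside the regular semi-classical $ M=4 $ class, so that the theory of \cite{FW_2006c,Wit_2009} applies at all: the four singular points $ \alpha_1^{-1},\alpha_2,\alpha_2^{-1},\alpha_1 $ are finite and non-zero and are pairwise distinct precisely when $ \alpha_1\neq\alpha_2 $, $ \alpha_1\alpha_2\neq1 $ and $ \alpha_i\notin\{0,\pm1\} $ --- the excluded loci being $ T=T_C $, $ T=0 $ and $ T=\infty $ --- while the residues $ \pm\tfrac12\notin\Z_{\geq0} $; hence conditions (i)--(iv) of the semi-classical class hold and $ \deg W=4 $ is preserved despite the loss of one independent variable. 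The two non-vanishing hypotheses stated in Corollary \ref{NonlinearRR_ColCorrSqL} are exactly the denominators that appear in the fractions above, so they are precisely what is needed to make the recursion well defined.
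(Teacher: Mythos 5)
Your proposal is correct and follows exactly the route the paper intends: the paper offers no separate argument for Corollary \ref{Recover_ColCorrSqL}, presenting it as a direct specialisation of Corollary \ref{Recover} (via the relations \eqref{Aux:2}--\eqref{Aux:5} and \eqref{tmp:1}--\eqref{tmp:3}) to the singularity data $(\zeta_1,\zeta_2,\zeta_3,\zeta_4)=(\alpha_1^{-1},\alpha_2,\alpha_2^{-1},\alpha_1)$, which is precisely what you carry out. Your additional verification that the $K_2\to 0$ limit stays inside the regular semi-classical $M=4$ class, and that the stated non-vanishing hypotheses are exactly the denominators arising in the recurrences, is consistent with the paper's remarks preceding Corollary \ref{LinearRR_ColCorrSqL}.
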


\subsection{Diagonal Correlations of the Square Lattice Model}
The diagonal correlations of the rectangular Ising model also have a Toeplitz determinant form (see Eqs. (3.29,30,31) of \cite{MW_1973})
given by
\begin{equation} 
  \langle\sigma_{0,0}\sigma_{N,N}\rangle  = \det(w_{j-k})_{0\leq j,k\leq N-1}, 
\label{DCSL_Toeplitz} 
\end{equation} 
where the Fourier coefficients $ w_{n} $ 
\begin{equation}
   w_{n} = \int^{\pi}_{-\pi}\frac{d\theta}{2\pi}
           \frac{k\cos(n\theta)-\cos((n+1)\theta)}{\sqrt{\left(1-2k\cos(\theta)+k^2\right)}} ,
\end{equation} 
are defined by the weight
\begin{equation} 
  w(\zeta) = \left[ \frac{(1-k^{-1}\zeta^{-1})}{(1-k^{-1}\zeta)} \right]^{1/2} ,
\label{DCSLwgt}
\end{equation}
and where $ k = S\bar{S} $, $ S=\sinh 2K_1, \bar{S}=\sinh 2K_2 $. This arises from \eqref{Telement} as $ z_3 \to 0 $ or 
$ v_3\to 1 $  and thus $ a=4z_1z_2, b=0, c=(1-z_1^2)(1-z_2^2) $, so that $ k=a/c $. We will also use 
\begin{equation}
 \alpha = \frac{1}{k} = \frac{(1-z_1^2) (1-z_2^2)}{4 z_1 z_2} .
\end{equation} 
Under this limiting process the singularities behave like
\begin{gather}
  \zeta _1 = \frac{4}{\alpha}(v_3-1)^{-2}+{\rm O}((v_3-1)^{-1}) ,
\\
  \zeta_2 = \alpha+{\rm O}((v_3-1)^{1}) ,
\\
  \zeta_3 = \frac{1}{\alpha}+{\rm O}((v_3-1)^{1}) ,
\\
  \zeta _4  = \frac{\alpha}{4}(v_3-1)^2+{\rm O}((v_3-1)^{3}) ,
\end{gather}
and so two of these merge with those at $ 0, \infty $ leaving two at finite, non-zero locations.
Therefore the three variable Garnier system reduces to the single variable Painlev\'e VI system.
We first consider the limit of Corollary \ref{LinearRR}.

\begin{corollary}[\cite{AP_2002a}]
Under the limiting process $ z_3 \to 0 $ or $ v_3\to 1 $ Eq. \eqref{LRR_triangle} for the Toeplitz matrix elements $ w_n $ 
reduces to the second order linear difference equation
\begin{equation}
  \alpha(2n-3)w_{n-2} - 2\left[(\alpha^2+1)n-1\right]w_{n-1} + \alpha(2n+1)w_{n} = 0.
\end{equation}
This is identical to Eq. (2.18) of \cite{AP_2002a}.
\end{corollary}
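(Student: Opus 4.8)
The plan is to obtain the stated relation by a direct degeneration of the fourth‑order recurrence \eqref{LRR_triangle}, tracking the limit $v_3\to 1$ (equivalently $z_3\to 0$) of each of its five coefficients. The first, essentially trivial, observation is that the coefficients of $w_{n-3}$ and $w_{n+1}$ are both of the form $(\text{linear in }n)\cdot v_1^2 v_2^2 (1-v_3^2)^2$ and hence vanish in the limit, so the four‑term relation collapses to a three‑term relation in $w_{n-2},w_{n-1},w_n$. That this limit of the recurrence is the recurrence satisfied by the limiting matrix elements is justified by noting that the symbol \eqref{ssSymbol} converges pointwise on $|\zeta|=1$ to \eqref{DCSLwgt} as $z_3\to 0$ (with $a\to 4z_1z_2$, $b\to 0$, $c\to(1-z_1^2)(1-z_2^2)$), so the Fourier coefficients \eqref{Telement} depend continuously on $v_3$ and the limiting $w_n$ are precisely the diagonal square‑lattice matrix elements of \eqref{DCSLwgt}.

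Next I would evaluate the three surviving coefficients at $v_3=1$. One has $\Gamma\to 1+v_1^2v_2^2-v_1^2-v_2^2=(1-v_1^2)(1-v_2^2)$, while the bracketed factors $v_3+(n-2)(1+v_3^2)$ and $v_3+n(1+v_3^2)$ tend to $2n-3$ and $2n+1$ respectively. The coefficient of $w_{n-1}$ requires the one genuine piece of algebra: at $v_3=1$ it equals $(n-1)\big[(1-v_1^2)^2(1-v_2^2)^2+16v_1^2v_2^2\big]+16v_1^2v_2^2$, the point being the identity that the $(n-1)$‑part of the bracket in \eqref{LRR_triangle} specialises to $(1-v_1^2)^2(1-v_2^2)^2+16v_1^2v_2^2$; this is checked by expanding $(1-v_1^2)^2(1-v_2^2)^2=1+v_1^4+v_2^4+v_1^4v_2^4-2v_1^2-2v_2^2-2v_1^4v_2^2-2v_1^2v_2^4+4v_1^2v_2^2$ and matching against the eight monomials, the discrepancy being exactly $16v_1^2v_2^2$.

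It then remains to re‑express everything through the single variable $\alpha$. From $z_i=(1-v_i)/(1+v_i)$ one has $1-z_i^2=4v_i/(1+v_i)^2$, hence $\alpha=(1-z_1^2)(1-z_2^2)/(4z_1z_2)=4v_1v_2/[(1-v_1^2)(1-v_2^2)]$, which is $1/k$ with $k=a/c$ at $z_3=0$. Therefore $(1-v_1^2)(1-v_2^2)=4v_1v_2/\alpha$ and $(1-v_1^2)^2(1-v_2^2)^2=16v_1^2v_2^2/\alpha^2$, so the outer coefficients become $2v_1v_2\Gamma=8v_1^2v_2^2/\alpha$ times $2n-3$ and $2n+1$, while the middle coefficient becomes $16v_1^2v_2^2\big[(1+\alpha^2)n-1\big]/\alpha^2$ after collecting $(n-1)(1+\alpha^2)+\alpha^2=(1+\alpha^2)n-1$. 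Dividing the three‑term relation by the common factor $8v_1^2v_2^2/\alpha$ and multiplying by $-\alpha$ yields exactly $\alpha(2n-3)w_{n-2}-2[(\alpha^2+1)n-1]w_{n-1}+\alpha(2n+1)w_n=0$, and comparison with Eq.~(2.18) of \cite{AP_2002a} finishes the argument. The only obstacle worth flagging is the bookkeeping for the $w_{n-1}$ coefficient: one must recognise the perfect‑square combination $(1-v_1^2)^2(1-v_2^2)^2+16v_1^2v_2^2$ inside the polynomial and then see that, after removing the common factor, it reduces precisely to $2[(\alpha^2+1)n-1]/\alpha$; every other step is a mechanical substitution of a limiting value.
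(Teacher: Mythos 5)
Your proposal is correct and follows exactly the route the paper implies (the paper states this corollary without writing out the computation): a direct degeneration of Eq.~\eqref{LRR_triangle} at $v_3=1$, with the outer coefficients killed by the factor $(1-v_3^2)^2$, the identification $\Gamma\to(1-v_1^2)(1-v_2^2)=4v_1v_2/\alpha$, and the recognition of $(1-v_1^2)^2(1-v_2^2)^2+16v_1^2v_2^2$ in the middle coefficient. All the algebra checks out, including the collection $(n-1)(1+\alpha^2)+\alpha^2=(1+\alpha^2)n-1$ and the final normalisation by $8v_1^2v_2^2/\alpha$.
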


Next we check the limit of the nonlinear system in Corollary \ref{NonlinearRR}.
\begin{proposition}\label{SLdiagonalCorr}
Through the limiting process $ z_3 \to 0 $ or $ v_3\to 1 $ the dynamical variables \eqref{ReccVars} behave as
\begin{gather}
 f^{1}_{n} = (v_3-1)^2F^{1}_{n} ,\quad f^{2}_{n} = F^{2}_{n} ,\quad f^{3}_{n} = (v_3-1)^{-4}F^{3}_{n} ,
\\
 g^{1}_{n} = (v_3-1)^{-2}G^{1}_{n} ,\quad g^{2}_{n} = (v_3-1)^{-2}G^{2}_{n} ,\quad g^{3}_{n} = (v_3-1)^{-2}G^{3}_{n} ,
\label{ScaleToDSL}
\end{gather} 
where $ F^{j}_{n}, G^{j}_{n} = {\rm O}(1) $ in this limiting process. Let us define the change of variables
\begin{equation}
  F^{2}_{n} =\alpha^{-4}f_{n} ,\quad G^{2}_{n} = \frac{4}{\alpha}g_{n} .
\end{equation} 
Under the conditions $g_{n}+n\alpha^{-1} \neq 0 $ and $ g_{n}+(n+\frac{1}{2})\alpha^{-1}-\frac{1}{2}\alpha \neq 0 $
the dynamical equations \eqref{1stLP:1}-\eqref{2ndLP:3} reduce under the limit to the pair of coupled first order difference equations
\begin{gather}
   \alpha^{-2}f_{n}f_{n+1} = \frac{\left[ g_{n}+(n+1)\alpha-\alpha^{-1} \right]\left[ g_{n}+(n+\frac{1}{2})\alpha-\frac{1}{2}\alpha^{-1} \right]}
                                  {\left[ g_{n}+n\alpha^{-1} \right]\left[ g_{n}+(n+\frac{1}{2})\alpha^{-1}-\frac{1}{2}\alpha \right]} ,
\label{1stLaxPairDIAGSL}\\
   g_{n}+g_{n-1}+2n\alpha^{-1}-\tfrac{1}{2}(\alpha+\alpha^{-1}) + (n+\tfrac{1}{2})\frac{(\alpha^2-1)}{\alpha}\frac{1}{1-f_{n}} + (n+\tfrac{1}{2})\frac{\alpha(\alpha^2-1)}{\alpha^2-f_{n}} = 0 .
\label{2ndLaxPairDIAGSL}
\end{gather}
This is precisely the $ D^{(1)}_4 $ discrete Painlev\'e system in the Sakai scheme otherwise known as 
``discrete Painlev\'e V''. These are subject to the initial conditions
\begin{equation}
    f_0 = \alpha\frac{w_{-1}+\alpha w_0}{\alpha w_{-1}+w_0} ,
\quad   
    g_0 = \frac{1}{2}\left( \frac{w_{-1}}{w_0}-\frac{w_0}{w_{-1}} \right) .
\end{equation} 
\end{proposition}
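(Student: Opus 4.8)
The plan is to substitute the prescribed scaling Ansatz directly into the six nonlinear recurrences \eqref{1stLP:1}--\eqref{2ndLP:3} and the eight initial conditions \eqref{initf1}--\eqref{initg3}, and then extract the leading behaviour as $\epsilon\coloneqq v_3-1\to0$. First I would record the asymptotics of every ingredient. From $v_3=1+\epsilon$ one has the exact identity $\Gamma=\Gamma_0-(v_1^2+v_2^2)(2\epsilon+\epsilon^2)$ with $\Gamma_0\coloneqq(1-v_1^2)(1-v_2^2)=4v_1v_2\alpha^{-1}$, and expanding the product form of $\Delta^2$ shows that $\Gamma$ and $\Delta$ agree to two orders, $\Delta=\Gamma_0-2(v_1^2+v_2^2)\epsilon+{\rm O}(\epsilon^2)$, with $\Gamma-\Delta=\tfrac{8v_1^2v_2^2}{\Gamma_0}\epsilon^2+{\rm O}(\epsilon^3)$; inserted into \eqref{singularity} this reproduces the quoted expansions $\zeta_1\sim\tfrac4\alpha\epsilon^{-2}$, $\zeta_2\to\alpha$, $\zeta_3\to\alpha^{-1}$, $\zeta_4\sim\tfrac\alpha4\epsilon^2$, and from \eqref{Sdata:1}--\eqref{Sdata:6} one reads that $e_1=e_3$, $e_2$, $m_1=m_3$, $m_2$ all diverge like $\epsilon^{-2}$ while $e_4=1$, $m_0=0$. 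One must also track the coalescence of the three Garnier apparent singularities $q_r$ hidden in $\Theta_n$: as $\zeta_4\to0$ and $\zeta_1\to\infty$ two of the $q_r$ accompany them, so that $\Theta_n$ effectively degenerates from degree three to degree one, which is what forces the stated orders $f^1_n={\rm O}(\epsilon^2)$, $f^3_n={\rm O}(\epsilon^{-4})$, $g^j_n={\rm O}(\epsilon^{-2})$ of the variables \eqref{ReccVars}. Because several leading terms cancel below, the expansions of $\Gamma,\Delta,\zeta_j$ and of the $q_r$ must be carried to order $\epsilon^3$.

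Next I would substitute \eqref{ScaleToDSL} together with $F^2_n=\alpha^{-4}f_n$ and $G^2_n=\tfrac4\alpha g_n$ into the first Lax pair, using the identifications $\mathcal{R}_1\leftrightarrow\zeta_4$, $\mathcal{R}_2\leftrightarrow\zeta_3$, $\mathcal{R}_3\leftrightarrow\zeta_1$, $\mathcal{R}_4\leftrightarrow\zeta_2$ obtained by comparing \eqref{Rdefn:1}--\eqref{Rdefn:4} with \eqref{singularity}. The left sides of \eqref{1stLP:1} and \eqref{1stLP:3} scale like $\epsilon^6$ and $\epsilon^{-10}$ respectively, at odds with their right sides, so these two equations can only survive by degenerating: matching orders forces the numerators on the right to lose leading factors, which together with \eqref{2ndLP:1} and \eqref{2ndLP:3} determines the limits of $\epsilon^2g^1_n$, $\epsilon^2g^3_n$ (as functions of $n,\alpha$ only) and of $f^1_n$, $f^3_n$ (as affine functions of $f_n$ with $n$-dependent coefficients) --- these four auxiliary variables are not dynamical, they are slaved to $f_n$, $g_n$, $n$. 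Feeding them into the remaining $j=2$ equation \eqref{1stLP:2}, whose left side tends to $\alpha^{-2}\cdot\alpha^{-8}f_nf_{n+1}$ because the prefactor $\tfrac{(1-v_3)^2(\Gamma+\Delta)}{(1+v_3)^2(\Gamma-\Delta)}\to\alpha^{-2}$, collapses it to exactly \eqref{1stLaxPairDIAGSL}; the hypotheses $g_n+n\alpha^{-1}\neq0$ and $g_n+(n+\tfrac12)\alpha^{-1}-\tfrac12\alpha\neq0$ are just the conditions that the two factors of the limiting denominator $\mathcal{R}_4-n$, $\mathcal{R}_4-n+(\text{shift})$ --- equivalently the limits of $\mathcal{S}_1,\mathcal{S}_2\neq0$ --- stay nonzero.

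The second Lax pair is treated the same way. Using the further cancellation $2\Gamma v_3-\Delta(1+v_3^2)=\tfrac{4v_1v_2(\alpha^2-1)}{\alpha}\epsilon^2+{\rm O}(\epsilon^3)$, and being careful that the $f^3_n$ terms, despite their $\epsilon^{-4}$ divergence, feed back into $\mathcal{S}_1$ at the same order $\epsilon^2$ as the $f^2_n$ terms, one finds after using the slaving relations that $\mathcal{S}_1\sim\epsilon^2\cdot(\text{affine in }f_n)$ and $\mathcal{S}_4\sim\epsilon^4\cdot(\text{affine in }f_n)$ with $\mathcal{S}_2,\mathcal{S}_3$ of order $\epsilon^4$; substituting these into the $j=2$ equation \eqref{2ndLP:2}, after multiplying through by $\epsilon^2$ so that $g^2_n+g^2_{n-1}\to\tfrac4\alpha(g_n+g_{n-1})$, yields \eqref{2ndLaxPairDIAGSL}. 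Finally I would take the limit of \eqref{initf1}--\eqref{initg3}: for $f^2_0$ the divergent parts of the numerator and denominator brackets of \eqref{initf2} are $-\tfrac{2}{\alpha^2\epsilon^2}(w_{-1}+\alpha w_0)$ and $-\tfrac{2}{\alpha\epsilon^2}(\alpha w_{-1}+w_0)$, giving $f^2_0\to\alpha^{-3}\tfrac{w_{-1}+\alpha w_0}{\alpha w_{-1}+w_0}=\alpha^{-4}f_0$; for $g^2_0$ the leading $\epsilon^{-2}$ part of \eqref{initg2} equals $-4-\tfrac2\alpha\tfrac{w_0}{w_{-1}}+\tfrac6\alpha\tfrac{w_1}{w_0}$, which collapses to $\tfrac4\alpha g_0=\tfrac4\alpha\cdot\tfrac12\big(\tfrac{w_{-1}}{w_0}-\tfrac{w_0}{w_{-1}}\big)$ only after one eliminates $w_1$ using the $n=1$ instance $w_1=\tfrac13(w_{-1}+2\alpha w_0)$ of the limiting three-term recurrence for the $w_n$ (the preceding corollary); the same reduction disposes of the remaining initial values.

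The main obstacle is the order book-keeping and the cascade of cancellations: $\Gamma-\Delta$, $2\Gamma v_3-\Delta(1+v_3^2)$, $\Delta(\Delta-\Gamma)+8v_1^2v_2^2v_3(1-v_3)^2$ and the additive shift terms inside \eqref{1stLP:1}--\eqref{1stLP:3} each vanish to higher order in $\epsilon$ than a naive count suggests, so one is forced to expand $\Gamma,\Delta,\zeta_j$ and the $q_r$ to third order and to verify every such cancellation before the leading balances can be read off; once that is done and the four auxiliary variables $f^1_n,f^3_n,g^1_n,g^3_n$ have been eliminated, the passage to \eqref{1stLaxPairDIAGSL}--\eqref{2ndLaxPairDIAGSL} and to the reduced initial data is lengthy but routine algebra, and the identification of the limiting system with the $D^{(1)}_4$ member of Sakai's scheme follows by matching it to the ``discrete Painlev\'e V'' form displayed earlier in the introduction.
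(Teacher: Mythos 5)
Your strategy coincides with the paper's: expand the auxiliary quantities, observe that \eqref{1stLP:1} and \eqref{1stLP:3} degenerate at leading order into constraints fixing $G^{1}_{n}$ and $G^{3}_{n}$ as explicit functions of $n$ and $\alpha$, use \eqref{2ndLP:1} and \eqref{2ndLP:3} to slave $F^{1}_{n}$ and $F^{3}_{n}$ to $F^{2}_{n}$, and then read off the discrete Painlev\'e V pair from the two remaining $j=2$ equations; your explicit checks of the initial data (including the elimination of $w_1$ via the $n=1$ instance of the limiting three-term recurrence) agree with the paper's.

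Two points need attention. First, your order count ``$\mathcal{S}_2,\mathcal{S}_3$ of order $\epsilon^4$'' is wrong for $\mathcal{S}_3$: its leading term is $16(1-v_1^2)^3(1-v_2^2)^3\,[\,1-(1-\alpha^2)f^{1}_{n}-\alpha^2 f^{2}_{n}\,](v_3-1)^2$, whose bracket tends to $1-\alpha^{-2}f_n\neq 0$ generically, so $\mathcal{S}_3={\rm O}((v_3-1)^2)$ while $\mathcal{S}_2={\rm O}((v_3-1)^4)$. If your count were carried through, the term $\tfrac{n}{(1+v_3)^2}\mathcal{S}_3/\mathcal{S}_2$ in \eqref{2ndLP:2} would be ${\rm O}(1)$ and would drop out of the ${\rm O}((v_3-1)^{-2})$ balance, losing one of the two $(n+\tfrac12)$ terms in \eqref{2ndLaxPairDIAGSL}. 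Second, the degenerations of \eqref{1stLP:1} and \eqref{1stLP:3} are quadratic in $G^{1}_{n}$ and $G^{3}_{n}$ respectively (the paper obtains $(\alpha G^{1}_{n}-4n)(\alpha G^{1}_{n}-4n-2)=0$ and $(\tfrac14\alpha G^{3}_{n}+1)(\tfrac12\alpha G^{3}_{n}+1)=0$), so these limits are not uniquely ``determined'' as you state; one must select the roots $G^{1}_{n}=(4n+2)/\alpha$, $G^{3}_{n}=-2/\alpha$ by consistency with the expanded initial values $G^{1}_{0}=2/\alpha$, $G^{3}_{0}=-2/\alpha$. Both are repairable slips within your (otherwise correct and essentially identical) argument rather than flaws of approach.
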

\begin{proof}
We note some preliminary expansions of a number of auxiliary quantities appearing in the Lax pairs \eqref{1stLP:1}-\eqref{1stLP:3}
and \eqref{2ndLP:1}-\eqref{2ndLP:3} 
\begin{align}
   \frac{2\Delta v_3}{v_1^2v_2^2(1+v_3)^6}\left[ \frac{2(\Delta^2-4 v_1^2 v_2^2 v_3(1+v_3)^2)}{\Gamma+\Delta}-\Delta \right]
   & = -\tfrac{1}{2}+{\rm O}((v_3-1)^2) ,
\\
   -\frac{2\Delta v_3}{v_1^2 v_2^2 (1+v_3)^6} \left[ \frac{2\left(\Delta ^2-4 v_1^2 v_2^2 v_3(1+v_3)^2\right)}{\Gamma-\Delta}+\Delta \right]
   & = -\frac{2(1-\alpha^2)}{\alpha^4}(v_3-1)^{-2}+{\rm O}((v_3-1)^{-1}) ,
\\
   -\frac{2\Delta v_3}{v_1^2v_2^2(1-v_3)^6}\left[ \frac{2\left(\Delta ^2+4 v_1^2 v_2^2 v_3(1-v_3)^2 \right)}{\Gamma-\Delta}+\Delta \right]
   & = -\frac{128}{\alpha^4}(v_3-1)^{-8}+{\rm O}((v_3-1)^{-7}) ,
\\
   \frac{2\Delta v_3}{v_1^2 v_2^2(1-v_3)^6} \left[ \frac{2\left(\Delta^2+4 v_1^2 v_2^2 v_3(1-v_3)^2 \right)}{\Gamma+\Delta}-\Delta \right]
   & = -2(1-\alpha^2)(v_3-1)^{-2}+{\rm O}((v_3-1)^{-1}) ,
\end{align} 
along with
\begin{align}
   \mathcal{R}_1 & = \frac{\alpha}{4}g^{1}_{n}(v_3-1)^2 + {\rm O}((v_3-1)^{3}) ,
\\
   \mathcal{R}_2 & = \left[ \alpha^{-1} g^{1}_{n}+\alpha^{-2} g^{2}_{n}+\alpha^{-3} g^{3}_{n}+\alpha^{-4} \right] + {\rm O}((v_3-1)) ,
\\
   \mathcal{R}_3 & = \frac{256}{\alpha^4}(v_3-1)^{-8} + {\rm O}((v_3-1)^{-7}) ,
\\
   \mathcal{R}_4 & = \left[ \alpha g^{1}_{n}+\alpha^2 g^{2}_{n}+\alpha^3 g^{3}_{n}+\alpha^4 \right] + {\rm O}((v_3-1)) ,
\end{align}
and
\begin{align}
  \mathcal{S}_1 & = 2(1-v_1^2)^2(1-v_2^2)^2 \left[ 1-(1-\alpha^2) f^{1}_{n}-\alpha^2 f^{2}_{n} \right](v_3-1)^2 + {\rm O}((v_3-1)^3) ,
\\
  \mathcal{S}_2 & = -4(1-v_1^2)^3(1-v_2^2)^3 (1-\alpha^2)f^{1}_{n} (v_3-1)^2 + {\rm O}((v_3-1)^3) ,
\\
  \mathcal{S}_3 & = 16(1-v_1^2)^3(1-v_2^2)^3 \left[ 1-(1-\alpha^2) f^{1}_{n}-\alpha^2 f^{2}_{n} \right](v_3-1)^2 + {\rm O}((v_3-1)^4) ,
\\
  \mathcal{S}_4 & = 8(1-v_1^2)^4(1-v_2^2)^4 \left[ 1-(1-\alpha^4) f^{1}_{n}-\alpha^4 f^{2}_{n} \right](v_3-1)^4 + {\rm O}((v_3-1)^5) .
\end{align}
Using the scalings \eqref{ScaleToDSL} in \eqref{1stLP:1}, and assuming  
$ G^{1}_{n}+\alpha G^{2}_{n}+\alpha^2 G^{3}_{n} \neq 0 $ and $ \alpha G^{1}_{n}+\alpha^2 G^{2}_{n}+\alpha^3 G^{3}_{n}-2(1-\alpha^2) \neq 0 $
we deduce from the leading order in this expansion that
\begin{equation}
 (\alpha G^{1}_{n}-4n)(\alpha G^{1}_{n}-4n-2) = 0 .
\label{expand1st}
\end{equation} 
Treating \eqref{1stLP:3} in a similar manner we conclude, again under the above conditions, that
\begin{equation}
   (\tfrac{1}{4}\alpha G^{3}_{n}+1)(\tfrac{1}{2}\alpha G^{3}_{n}+1) = 0 .
\label{expand2nd}
\end{equation} 
Which of these two sets of solutions are relevant can be settled by the initial conditions, which we undertake subsequently.
The other member of the first set of the Lax pair, \eqref{1stLP:2}, now has at leading order the expression
\begin{equation}
  \frac{1}{\alpha^8} \left\{ 
   \alpha^6 F^{2}_{n} F^{2}_{n+1}-
                      \frac{\left[ \alpha^2 G^{1}_{n}+\alpha  G^{2}_{n}+G^{3}_{n} \right]}
                           {\left[ G^{1}_{n}+\alpha  G^{2}_{n}+\alpha^2 G^{3}_{n} \right]}
                      \frac{\left[ \alpha^3 G^{1}_{n}+\alpha^2 G^{2}_{n}+\alpha  G^{3}_{n}-2(1-\alpha^2) \right]}
                           {\left[ \alpha  G^{1}_{n}+\alpha^2 G^{2}_{n}+\alpha^3 G^{3}_{n}-2(1-\alpha^2) \right]}
                      \right\} .
\label{aux1stLP}
\end{equation}
Undertaking the expansion of the initial values given in \eqref{initf1}-\eqref{initf3} and \eqref{initg1}-\eqref{initg3}
we compute the leading terms as
\begin{align}
   f^{1}_0 & = \frac{w_0}{4(\alpha w_{-1}+w_0)}(v_3-1)^2 + {\rm O}((v_3-1)^3) ,
\\
   f^{2}_0 & = \frac{w_{-1}+\alpha w_0}{\alpha^3(\alpha w_{-1}+w_0)} + {\rm O}(v_3-1) ,
\\
   f^{3}_0 & = -\frac{16w_{-1}}{\alpha^3(\alpha w_{-1}+w_0)}(v_3-1)^{-4} + {\rm O}((v_3-1)^{-3}) ,
\\
   g^{1}_0 & = \frac{2}{\alpha}(v_3-1)^{-2} + {\rm O}((v_3-1)^{-1}) ,
\\
   g^{2}_0 & = -\frac{2\alpha w_{-1}w_0+w_0^2-3w_{-1}w_1}{\frac{1}{2}\alpha w_{-1}w_0}(v_3-1)^{-2} + {\rm O}((v_3-1)^{-1}) ,
\\
   g^{3}_0 & = -\frac{2}{\alpha}(v_3-1)^{-2} + {\rm O}((v_3-1)^{-1}) ,
\end{align}
and thus
\begin{equation}
   F^{1}_0 = \frac{w_0}{4(\alpha w_{-1}+w_0)} ,
\quad   
   F^{2}_0 = \frac{w_{-1}+\alpha w_0}{\alpha^3(\alpha w_{-1}+w_0)} ,
\quad  
   F^{3}_0 = -\frac{16w_{-1}}{\alpha^3(\alpha w_{-1}+w_0)} ,
\end{equation}
\begin{equation}   
   G^{1}_0 = \frac{2}{\alpha} ,
\quad   
   G^{2}_0 = \frac{2}{\alpha}\left( \frac{w_{-1}}{w_0}-\frac{w_0}{w_{-1}} \right) ,
\quad    
   G^{3}_0 = -\frac{2}{\alpha} .
\end{equation}
From these initial values we see that the choice for the solutions to \eqref{expand1st} and \eqref{expand2nd} are
\begin{equation}
  G^{1}_{n} = \frac{4n+2}{\alpha} ,\quad G^{3}_{n} = -\frac{2}{\alpha} .
\label{aux1st}
\end{equation} 
Now we turn our attention to the second set of the Lax pair and in particular expanding \eqref{2ndLP:1} we find at the 
leading order that
\begin{equation}
   G^{1}_{n}+G^{1}_{n-1}-\frac{2(2n-1)}{\alpha}-\frac{4n}{\alpha}
   + \frac{16n}{\alpha}\frac{(1-\alpha^2)F^{1}_{n}}{4(1-\alpha^2)F^{1}_{n}+\alpha^4 F^{2}_{n}-1} .
\end{equation} 
Equating this to zero and substituting the first of the solutions \eqref{aux1st} for $ G^{1}_{n} $ we compute $ F^{1}_{n} $ as
\begin{equation}
   F^{1}_{n} = \frac{\alpha^4 F^{2}_{n}-1}{4(\alpha^2-1)(2n+1)} .
\label{aux2nd:1}
\end{equation} 
Continuing we expand \eqref{2ndLP:3} to leading order and deduce the expression
\begin{equation}
  G^{3}_{n}+ G^{3}_{n-1}-\frac{4n-2}{\alpha}+\frac{4}{\alpha}(n+1)\frac{\alpha^2 F^{2}_{n}-1}{\alpha^2 F^{2}_{n}-\frac{1}{16}\alpha^2(1-\alpha^2)F^{3}_{n}-1} .
\end{equation} 
Utilising the second solution of \eqref{aux1st} we solve for $ F^{3}_{n} $ as
\begin{equation}
   F^{3}_{n} = \frac{16(\alpha^2 F^{2}_{n}-1)}{\alpha^2(\alpha^2-1)(2n+1)} .
\label{aux2nd:2}
\end{equation}
Lastly we expand \eqref{2ndLP:2} and compute to leading order
\begin{multline}
  G^{2}_{n}+G^{2}_{n-1}+\frac{4(1+\alpha^2)(n-1)}{\alpha^2}+4+\frac{4n}{\alpha^2}
\\
  +\frac{4(n+1)}{\alpha^2} \frac{1-\alpha^4 F^{2}_{n}}{\alpha^2 F^{2}_{n}-\frac{1}{16}\alpha^2(1-\alpha^2) F^{3}_{n}-1}
  -\frac{4(1-\alpha^2)n}{\alpha^2} \frac{4 F^{1}_{n}-1}{4(1-\alpha^2) F^{1}_{n}+\alpha^4 F^{2}_{n}-1} = 0 .
\label{aux2ndLP}
\end{multline}
Finally employing the solutions \eqref{aux1st} and the relations \eqref{aux2nd:1} and \eqref{aux2nd:2} in the two remaining
equalities, \eqref{aux1stLP} and \eqref{aux2ndLP}, we arrive at \eqref{1stLaxPairDIAGSL} and \eqref{2ndLaxPairDIAGSL}
respectively.
\end{proof}

\begin{proposition}\label{recoverSLdiagonalCorr}
The diagonal correlations of the square lattice Ising model are governed by the generic relations of Corollary \ref{Recover}
along with
\begin{equation}
   \frac{r_{n+1}}{r_{n}} = \frac{\alpha-\alpha^{-1}f_{n}}{1-f_{n}} ,
\label{recoverDIAGSL:1}
\end{equation} 
and
\begin{equation}
   (n+\tfrac{1}{2})\lambda_{n+1}-(n-\tfrac{1}{2})\lambda_{n} = -n(\alpha+\alpha^{-1}) - g_{n} + (n+\tfrac{1}{2})\frac{1-f_{n}}{\alpha-\alpha^{-1}f_{n}} .
\label{recoverDIAGSL:2}
\end{equation} 
\end{proposition}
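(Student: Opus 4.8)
The plan is to parallel the limiting analysis in the proof of Proposition \ref{SLdiagonalCorr}, now applied to the three recurrences \eqref{recover:1}, \eqref{recover:2}, \eqref{recover:3} of Corollary \ref{Recover} rather than to \eqref{1stLP:1}--\eqref{2ndLP:3}. The relations in Corollary \ref{Recover} are of two kinds. The genuinely model-independent identities --- the $\tau$-function recurrence \eqref{Irecur}, the closing relations $\lambda_{n+1}-\lambda_{n}=r_{n+1}\bar{r}_{n}$ and $\bar{\lambda}_{n+1}-\bar{\lambda}_{n}=\bar{r}_{n+1}r_{n}$, and the initial data $I_{0}=1$, $I_{1}=w_{0}$, $r_{0}=\bar{r}_{0}=1$, $\lambda_{0}=\bar{\lambda}_{0}=0$ --- carry no $v_{3}$-dependent spectral data, so they pass through the limit $v_{3}\to1$ unchanged; this is exactly the clause ``governed by the generic relations of Corollary \ref{Recover}''. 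Only \eqref{recover:1}--\eqref{recover:3} have to be taken to the limit.

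First I would check that none of $r_{n},\bar{r}_{n},\lambda_{n},\bar{\lambda}_{n}$ degenerates as $v_{3}\to1$. Inserting the scalings \eqref{ScaleToDSL}, the change of variables $F^{2}_{n}=\alpha^{-4}f_{n}$, and the $\mathcal{S}_{k}$-expansions recorded in the proof of Proposition \ref{SLdiagonalCorr} into \eqref{recover:1}, one finds $\mathcal{S}_{1}={\rm O}((v_{3}-1)^{2})$ while $f^{1}_{n}=(v_{3}-1)^{2}F^{1}_{n}$ makes $\mathcal{S}_{2}={\rm O}((v_{3}-1)^{4})$, so $\mathcal{S}_{1}/\mathcal{S}_{2}={\rm O}((v_{3}-1)^{-2})$; this is exactly compensated by the explicit prefactor $2nv_{1}v_{2}(1-v_{3})^{2}/(n+1)$ in \eqref{recover:1}, so $r_{n+1}/r_{n}$ has a finite nonzero limit and, with $r_{0}=1$, $r_{n}={\rm O}(1)$. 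Since the correlations $I_{N}$ have a finite limit, the identity \eqref{Irecur} forces $r_{n}\bar{r}_{n}={\rm O}(1)$, hence $\bar{r}_{n}={\rm O}(1)$, and then $\lambda_{n},\bar{\lambda}_{n}={\rm O}(1)$ by their telescoping sums. Passing to the leading order in \eqref{recover:1}, using $v_{1}v_{2}/[(1-v_{1}^{2})(1-v_{2}^{2})]=\alpha/4$, and eliminating $F^{1}_{n}$ via the constraint \eqref{aux2nd:1} in favour of $f_{n}=\alpha^{4}F^{2}_{n}$, collapses the apparent $n$-dependence and produces \eqref{recoverDIAGSL:1}.

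For \eqref{recoverDIAGSL:2} I would substitute $g^{3}_{n}=(v_{3}-1)^{-2}G^{3}_{n}$ with $G^{3}_{n}$ fixed by \eqref{aux1st}, together with the expansions of $(1+v_{3}^{2})\Gamma/[v_{1}v_{2}(1-v_{3}^{2})^{2}]$ and of $\mathcal{S}_{3}/\mathcal{S}_{1}$, into \eqref{recover:2}. The three a priori singular ${\rm O}((v_{3}-1)^{-2})$ contributions on the right --- from the $\Gamma$-term, from $g^{3}_{n}$, and from the $\mathcal{S}_{3}/\mathcal{S}_{1}$-term --- must cancel, which is the mechanism keeping $\lambda_{n}$ finite as demanded above; reading off the ${\rm O}(1)$ remainder, re-expressing it through $F^{2}_{n}=\alpha^{-4}f_{n}$, $G^{2}_{n}=(4/\alpha)g_{n}$ and through \eqref{recoverDIAGSL:1} --- so that $\tfrac{1-f_{n}}{\alpha-\alpha^{-1}f_{n}}$ is recognised as $r_{n}/r_{n+1}$ --- and finally using the closing identity $\lambda_{n+1}-\lambda_{n}=r_{n+1}\bar{r}_{n}$ to recast the left side $(n+1)\lambda_{n+1}-n\lambda_{n}$ into the half-integer-shifted form $(n+\tfrac12)\lambda_{n+1}-(n-\tfrac12)\lambda_{n}$, one arrives at \eqref{recoverDIAGSL:2}. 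As a check, the limit of \eqref{recover:3} furnishes $\bar{r}_{n}$ a second, independent way, and \eqref{recoverDIAGSL:1}--\eqref{recoverDIAGSL:2} fed back through \eqref{Irecur} should reproduce the $\sigma$-form \eqref{JM1980}.

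The main obstacle is the order bookkeeping in \eqref{recover:2}--\eqref{recover:3}. Whereas the $f$-variables scale at three distinct orders, $(v_{3}-1)^{2}$, $(v_{3}-1)^{0}$, $(v_{3}-1)^{-4}$, all three $g$-variables share the single scale $(v_{3}-1)^{-2}$; consequently in each of \eqref{recover:2} and \eqref{recover:3} the leading $(v_{3}-1)^{-2}$ coefficients coming from $g^{j}_{n}$, from the $\Gamma$-terms, and from the ratios $\mathcal{S}_{k}/\mathcal{S}_{l}$ have to conspire to vanish, and verifying this cancellation --- what certifies the limit as non-degenerate --- relies on the precise constants in the $\mathcal{S}_{k}$-expansions and on the values \eqref{aux1st} of $G^{1}_{n},G^{3}_{n}$. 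A related nuisance is that, once the Ising scalings are imposed, several $\mathcal{S}_{k}$ drop to lower order than one would naively read off (for instance $\mathcal{S}_{2}$ acquires an extra $(v_{3}-1)^{2}$ from $f^{1}_{n}$), so each ratio must be formed retaining enough subleading terms; keeping these conventions straight, rather than any conceptual difficulty, is where the effort goes.
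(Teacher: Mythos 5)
Your treatment of \eqref{recoverDIAGSL:1} coincides with the paper's: the paper's proof expands \eqref{recover:1} as $v_3\to1$ using the $\mathcal{S}_1,\mathcal{S}_2$ expansions from Proposition \ref{SLdiagonalCorr}, obtains $\tfrac{r_{n+1}}{r_n}=\alpha\tfrac{n}{n+1}\tfrac{\alpha^2F^2_n-1-\frac{1}{16}\alpha^2(1-\alpha^2)F^3_n}{4(1-\alpha^2)F^1_n+\alpha^4F^2_n-1}$ at leading order, and then uses \eqref{aux2nd:1} and \eqref{aux2nd:2} to collapse the $n$-dependence — exactly your route. Note, however, that the paper's proof stops there: it never derives \eqref{recoverDIAGSL:2} at all, so the second half of your argument is where the real work (and the real risk) lies.

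There is a genuine gap in that second half. Equation \eqref{recover:2} carries the coefficients $(n+1)$ and $n$ on $\lambda_{n+1}$ and $\lambda_n$ because the triangular weight has $m_0=0$ (cf.\ \eqref{Aux:5}, where the coefficients are $n+1+m_0$ and $n+m_0$, and \eqref{Sdata:4}). The target \eqref{recoverDIAGSL:2} carries $(n+\tfrac12)$ and $(n-\tfrac12)$ because the confluent square-lattice weight \eqref{DCSLwgt} has $m_0=-\tfrac12$, as the paper records in the Remark immediately following the proposition. Your proposed mechanism for bridging this — ``using the closing identity $\lambda_{n+1}-\lambda_n=r_{n+1}\bar r_n$ to recast the left side'' — does not close: the rewriting $(n+1)\lambda_{n+1}-n\lambda_n=(n+\tfrac12)\lambda_{n+1}-(n-\tfrac12)\lambda_n+\tfrac12 r_{n+1}\bar r_n$ deposits an extra term $\tfrac12 r_{n+1}\bar r_n$ on the right which is not a function of $f_n,g_n$ and which you never show to be produced by the limit of the right-hand side of \eqref{recover:2}. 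The half-integer shift is not a bookkeeping rearrangement; it reflects the jump in the spectral data ($m_0:0\to-\tfrac12$, $\deg\Theta_n$ dropping) when $\zeta_4\to0$ carries its exponent into the origin and $\zeta_1\to\infty$, and the same degeneration makes your blanket assertion that the three ${\rm O}((v_3-1)^{-2})$ contributions in \eqref{recover:2} ``must cancel'' something that has to be verified, not inferred from finiteness of $\lambda_n$ (whose very normalisation is what is changing under the confluence). The clean repair, consistent with the paper's Remark, is to derive \eqref{recoverDIAGSL:2} directly from \eqref{Aux:5} and the analogue of \eqref{tmp:3} for the degenerate weight, whose data $e_1=\alpha+\alpha^{-1}$, $m_0=-\tfrac12$ immediately give the term $-n(\alpha+\alpha^{-1})$ and the coefficients $n+\tfrac12$, $n-\tfrac12$, with $\tfrac{1-f_n}{\alpha-\alpha^{-1}f_n}=r_n/r_{n+1}$ supplied by \eqref{recoverDIAGSL:1}.
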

\begin{proof}
Expanding the relation \eqref{recover:1} as $ v_3 \to 1 $ and using the results of the previous proposition we find the 
leading order gives 
\begin{equation}
    \frac{r_{n+1}}{r_{n}}-\alpha\frac{n}{n+1}\frac{\alpha^2 F^{2}_{n}-1-\frac{1}{16}\alpha^2(1-\alpha^2) F^{3}_{n}}{4(1-\alpha^2) F^{1}_{n}+\alpha^4 F^{2}_{n}-1} .
\end{equation}
Employing \eqref{aux2nd:1} and \eqref{aux2nd:2} we deduce \eqref{recoverDIAGSL:1}.
\end{proof}

\begin{remark}
The Lax pair \eqref{1stLaxPairDIAGSL} and \eqref{2ndLaxPairDIAGSL} can also be verified directly from the theory in \S 4 of
\cite{Wit_2009}. From the weight \eqref{DCSLwgt} the configuration of singularities is given by
\begin{equation}
\left\{
\begin{array}{cccc}
   0 & \alpha & \alpha^{-1} & \infty \\
  -1/2 &  1/2 & -1/2 &  1/2
\end{array}
\right\} , 
\end{equation} 
from which we compute the spectral data as $ e_0=1, e_1=\alpha+\alpha^{-1}, e_2=1 $ and $ m_0=-1/2, m_1=-\alpha, m_2=-1/2 $.
Employing the parameterisation of the spectral polynomials as
\begin{gather}
  \frac{\kappa_{n+1}}{\kappa_{n}}\Theta_n(\zeta) = (n+\tfrac{1}{2})\zeta+\vartheta_n ,
\\
  \Omega_{n}(\zeta) = \tfrac{3}{4}\zeta^2-(g_n+\tfrac{1}{2}\alpha)\zeta-n-\tfrac{1}{4} ,
\end{gather}
and making the definition 
\begin{equation}
  \alpha^{-2}f_n = \frac{\Theta_n(\alpha^{-1})}{\Theta_n(\alpha)} ,
\end{equation}
we can use Eq. (2.18) and Eq. (1.33) of \cite{Wit_2009} to deduce \eqref{1stLaxPairDIAGSL} and \eqref{2ndLaxPairDIAGSL}
respectively. This Lax pair differ from those of Prop. 4.1 of \cite{Wit_2009} only in the rescaling of the independent variable 
$ \zeta $. Alternative, but entirely equivalent, recurrence relations have been given in Corollary 5.3 of \cite{FW_2005}
and Corollary 1 of \cite{Wit_2007}.
\end{remark}

\begin{remark}
The forgoing {\it generic} theory does not apply without modifications to the critical cases of the Curie, N\'eel and
disordered points where $ \Delta=0, \bar{\Delta}=0 $ because of the undefined nature of the simultaneous vanishing of numerators
and denominators. There are two strategies one could adopt here: perform a local expansion about the critical points or
adopt the more direct approach of evaluation of the primary variables from first principles. However we defer this task to a
subsequent study because of the lengthy technical issues involved.
\end{remark}

\begin{remark}
We have not attempted to discuss the hypergeometric function evaluation of the Toeplitz matrix elements $ w_n $ in the general case of
the various regimes: $ T < T_C, T_N $, or $ T_N < T < T_D $ and $ T > T_D $. Suffice it to say that one would expect the three-variable
Lauricella $ D $ hypergeometric functions would appear in the diagonal correlations on the generic anisotropic triangular lattice
whereas the third complete elliptic integral would describe the row/column correlations of the anisotropic square lattice, in addition
to the first and second complete elliptic integrals.
\end{remark}

\section*{Acknowledgements}
This research has been supported by the Australian Research Council Centre of Excellence, Mathematics and Statistics of Complex
Systems. We also gratefully acknowledge support from the Simons Center for Geometry and Physics, Stony Brook University where some
of the research for this paper was performed. Finally we are indebted to Barry McCoy and Jacques Perk for their keen interest in 
developments on the fundamental understanding of the Ising model and for their constant encouragement.   

\setcounter{equation}{0}
\bibliographystyle{amsplain}
\inputencoding{cp1252}
\bibliography{moment,nonlinear,random_matrices}
\inputencoding{utf8}

\end{document}